\definecolor{orange}{rgb}{1,0.5,0}
\title{\bf Gromov-Witten theory under degree-4 Type II Extremal Transitions}
\author{Rongxiao Mi}
\date{\vspace{-5ex}}
\newtheorem{thm}{Theorem}[section]
\newtheorem{lemma}[thm]{Lemma}
\newtheorem{conj}[thm]{Conjecture}
\theoremstyle{definition}
\newtheorem{definition}[thm]{Definition}
\def\th@remark{%
	\thm@headfont{\bfseries}%
	\normalfont 
	\thm@preskip\topsep \divide\thm@preskip\tw@
	\thm@postskip\thm@preskip
}
\theoremstyle{remark}
\newtheorem{remark}[thm]{Remark}
\numberwithin{equation}{section}
\newcommand{\ev}{\mathrm{ev}}
\newcommand{\cM}{\overline{\mathcal{M}}}
\newcommand{\cL}{\mathcal{L}}
\newcommand{\cH}{\mathcal{H}}
\newcommand{\cV}{\mathcal{V}}
\newcommand{\ZZ}{\mathbb{Z}}
\newcommand{\CC}{\mathbb{C}}
\newcommand{\cO}{\mathcal{O}}
\newcommand{\PP}{\mathbb{P}}
\newcommand{\DD}{\mathbb{D}}
\newcommand{\NN}{\mathbb{N}}
\begin{document}
\maketitle

\begin{abstract}
In this article, we study the change of genus zero Gromov-Witten invariants under Type II extremal transitions in degree 4.
\end{abstract}

\tableofcontents

\section{Introduction}
There has been a long-standing interest in understanding how Gromov-Witten theories change under various types of surgeries. For one reason, surgeries play a significant role in linking two geometrically or topologically distint Calabi-Yau 3-folds. For another, it gives some insight on how one can extend the picture of mirror symmetry to a larger class of Calabi-Yau 3-folds. Over two decades ago, Li-Ruan \cite{MR1839289} initiated a program to study the change of Gomov-Witten theory under flops and conifold transitions, which has led to many important new discoveries and exciting developments of degeneration techinques in Gromov-Witten theory. More specifically, they have shown that a flop between two Calabi-Yau 3-folds induces an isomorphism between quantum cohomology rings, up to analytic continuation over the extended K\"ahler moduli space. Later, Lee-Lin-Wang established the quantum invariance for higher dimensional flops in various settings \cite{MR2680420, MR3568338, MR3568339}. Their work highlights the fact that two Gromov-Witten theories related by a flop are essentially equivalent via an appropriate analytic continuation of the quantum variables.

In contrast with flops, the change of Gromov-Witten theory under extremal transitions are usually subtler and more complicated. There is apparently an asymmetric nature alluded in forming the picture of extremal transitions: one side is obtained by resolution of singularity, yet the other side is by a smoothing. For this reason, we cannot expect their Gromov-Witten theories to be equivalent. There is not yet an established concensus as to how one can relate their Gromov-Witten theories. What has been a commonplace approach is to compare certain significant structures arising from the genus zero Gromov-Witten theory, as were adopted in many works \cite{MR3568645,MR3751817}. In \cite{Mi:2017zwz}, the author takes a new approach to relate the (ambient part) quantum $D$-modules (denoted by $\cH(X)$ for a given variety $X$) in the case of cubic extremal transitions, and proposes the following conjecture:

\begin{conj}\label{myconj}
	Suppose two smooth projective varieties $X$ and $Y$ are related by a primitive extremal transition. then one may perform analytic continuation of $\cH(X)$ over the extended K\"ahler moduli to obtain a $D$-module $\bar{\cH}(X)$, then there is a divisor $E$ and a submodule $\bar{\cH}^E(X)\subseteq \bar{\cH}(X)$ with maximum trivial $E$-monodromy such that
	$$\bar{\cH}^E(X)|_E\simeq \cH(Y),$$
	where $\bar{\cH}^E(X)|_E$ is the restriction to $E$.
\end{conj}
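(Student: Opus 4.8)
The plan is to prove \Cref{myconj} in the degree-4 Type II setting of the present paper, where $X$ is a smooth Calabi--Yau threefold containing a contractible divisor $E$ that is a degree-4 del Pezzo surface (a complete intersection of two quadrics in $\PP^4$, anticanonically embedded), its contraction $X\to\bar X$ collapses $E$ to a point, and $Y$ is a smoothing of the resulting cone singularity. Rather than argue abstractly, I would make each of the four objects $\bar\cH(X)$, the divisor $E$, the submodule $\bar\cH^E(X)$, and $\cH(Y)$ completely explicit through Givental's formalism, and then match them by a concrete analytic continuation of hypergeometric solutions.

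First I would realize $X$ and $Y$ as toric complete intersections and write down their small $I$-functions via the quantum Lefschetz principle. The ambient quantum $D$-module $\cH(X)$ is then the cyclic $D$-module presented by the Picard--Fuchs system satisfied by $I_X$, and likewise for $\cH(Y)$. The K\"ahler moduli space of $X$ carries one more coordinate than that of $Y$, namely the Novikov variable dual to the class of the exceptional curve; viewing $\cH(X)$ as a flat connection over the partially compactified moduli in this distinguished variable realizes $\bar\cH(X)$ as its analytic continuation, with $E$ the boundary divisor attached to the contraction.

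Second, along this distinguished direction the $I$-function of $X$ factors through a hypergeometric piece (with an extra factor reflecting the two quadric constraints that cut out the del Pezzo); using a Mellin--Barnes integral representation I would continue the flat sections from the large-volume chamber of $X$ across the discriminant into the chamber adjacent to $Y$. Computing the local monodromy of the continued sections around $E$ then isolates the maximal flat subsystem on which this monodromy is trivial; this subsystem defines $\bar\cH^E(X)$, and its rank should equal $\dim\cH(Y)$ after the expected reorganization of cohomology, in which $[E]$ together with its quantum descendants drops out while the vanishing cycles produced by the smoothing are accounted for. Finally I would restrict to $E$ and verify $\bar\cH^E(X)|_E\simeq\cH(Y)$ by matching Picard--Fuchs operators and identifying the continued $I_X$ with $I_Y$ under an explicit change of Novikov variables.

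The hardest step is the monodromy analysis that pins $\bar\cH^E(X)$ down as \emph{exactly} the maximal trivial-monodromy submodule of the correct rank. Relative to the cubic case, the degree-4 transition replaces the single cubic constraint by a pair of quadrics, so the hypergeometric factor governing the transverse direction --- and hence the Mellin--Barnes continuation and the ensuing residue computation --- has a genuinely different structure that cannot be imported verbatim from the earlier work. Equally delicate is the bookkeeping of the change of K\"ahler parameters across the transition, where the number of quantum variables drops by one; this reduction must be realized through the precise hypergeometric identity rather than through any formal or dimension-counting shortcut, and it is here that I expect the main technical effort to concentrate.
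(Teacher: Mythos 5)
Your proposal follows the same overall architecture as the paper: explicit $I$-functions via quantum Lefschetz, identification of $\cH(X)$ and $\cH(Y)$ with the cyclic $D$-modules attached to them, analytic continuation in the distinguished variable dual to the exceptional curve class (realized by the change of variables $x\mapsto q_1^{-1}$, $y\mapsto q_1q_2$), a monodromy analysis around the boundary divisor $x=0$, and finally the identification $\lim_{x\to 0}\bar I^{Y}(x,y)=I^{Y}(y)$. Where you diverge is in the implementation of the continuation-and-monodromy step, which you correctly flag as the hardest part. You propose a Mellin--Barnes integral representation to continue the actual flat sections across the discriminant and then read off their local monodromy; the paper never does this. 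Instead it changes variables in the Picard--Fuchs (GKZ-type) operators themselves, writes down \emph{ad hoc} an auxiliary two-variable series $\bar I^{Y}(x,y)$ solving the transformed system with manifestly trivial $x$-monodromy, constructs the two missing solutions $I_5,I_6$ by a Frobenius-type ansatz $x^{1/2+u}\sum C_{i,j}x^iy^j$ with $u^2=0$ (whose $x^{1/2}$ and $\log x$ factors exhibit the nontrivial monodromy), and closes the argument with a holonomic rank bound showing these six solutions exhaust the solution space. This sidesteps residue computations and connection matrices entirely: the paper never needs to know how the components of $I^{X}$ decompose into the new basis, only that the trivial-monodromy subspace is exactly the span of the components of $\bar I^{Y}$. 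Your route, if carried out, would prove somewhat more (explicit connection formulas between the two large-volume expansions), but at the cost of genuinely harder multivariable hypergeometric analysis; note also that for the two global examples the GKZ generators $\triangle_1,\triangle_2$ alone do not cut the rank down to six, and the paper needs an extra order-3 operator $\cL$ extracted from a factorization identity such as $2\triangle_1+\delta_{q_1}^2\delta_{q_2}\triangle_2=(\delta_{q_1}+\delta_{q_2})\delta_{q_1}\cL$ --- a wrinkle your plan would have to confront regardless of which continuation technique you use.
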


The above conjecture rests on several observations: first, the quantum $D$-modules of $X$ and $Y$ usually involves different quantum variables. To compare their quantum $D$-modules, one have to find a way to relate their quantum variables. This is usually done by analytic continuation, and in this way we obtain a $D$-modules $\bar{\cH}(X)$. Moreover, there is a discrepency between the rank of $\bar{\cH}(X)$ and $\cH(Y)$, which leads us to think about identifying $\cH(Y)$ as a submodule of $\bar{\cH}(X)$ after certain restriction along a transition divisor $E$. There is also the issue of monodromy involved when one wants to make sense of the restriction of a quantum $D$-module. For this reason, we need to require the submodule admit only trivial monodromy around the transition divisor in question. Finally, it turns out that in many cases, this submodule should have maximum trivial $E$-monodromy.

This paper continues our study on the change of Gromov-Witten theory under Type II transitions, i.e. the birational contraction involved in the transition is given by contracting a divisor to a point. If one reuiqres that the variety obtained after performing birational contraction has only isolated complete intersection singularity, then the exceptional divisor is a Del-Pezzo surface of degree either 3 or 4 (cf. \cite{MR1464900}). For the degree-3 case, \Cref{myconj} has already been verified in \cite{Mi:2017zwz} for both the local model and a global example involving two Calabi-Yau 3-folds related by such a transition. 

 In this paper, we will deal with the remaining case above where the divisor in question is a Del-Pezzo surface of degree 4. In particular, we will verify \Cref{myconj} for the local model as well as two global examples. In the future work, we expect more general cases would be proved using degeneration techinques.\\
~\\
\textbf{Type II transition in degree 4.} Let us first introduce the setup of the Type II extremal transition in degree 4: We have a pair of Calabi-Yau 3-folds $(X,Y)$ related in the following diagram
\begin{center}
	\begin{tikzcd}
		X\arrow{d}{\pi}\arrow[dr,bend left, dotted]&\\
		\overline{Y}\ar[squiggly]{r} &Y
	\end{tikzcd}
\end{center}
in which $\pi:X\to \overline{Y}$ is a birational contraction of a divisor $E\hookrightarrow X$ to a point $p\in \overline{Y}$, and going from $\overline{Y}$ to $Y$ is given by smoothing out the singularity at $p$. We further require that $E$ be a Del-Pezzo surface of degree 4 in $\PP^4$, and the curve classes on $E$ generate an extremal ray in the Mori cone of $X$. In this case, the singularity aournd $p$ is a complete intersection of two equations with quadratic leading terms. We call the process of going form $X$ to $Y$ \emph{a Type II extremal transition in degree 4} (degree-4 transition for short). We are going to consider the following local model and global examples.\\
~\\
\textbf{The Local Model.} Let us focus on the local picture of the surgery. Let $E$ be the degree-4 Del-Pezzo surface inside a Calabi-Yau 3-fold $X$, which gets contracted to a point under $\pi$. Consider a tubular neighborhood around $E$, which is identified with its normal bundle $N_{E/X}$. Since the ambient variety is a Calabi-Yau 3-fold, we also have $N_{E/X}\simeq K_E$. Under the birational morphism $\pi: X\to \overline{Y}$, the divisor is contracted to a point with singularity given by a complete intersection of two equations with quadratic leading terms. Smoothing out the singularity locally yields a (2,2) complete intersection in $\PP^5$. To study the Gromov-Witten theory of this local picutre, we usually compactify $N_{E/X}\simeq K_E$ and take $X':=\PP(K_E\oplus \cO)$, while $Y':=(2,2)\mbox{ complete intersection in }\PP^5$. The transition from $X'$ to $Y'$ is the \emph{local model} of the Type II extremal transition in degree 4.\\
~\\
\textbf{Global Examples.} We will give two global examples. Here "global" means both sides of the transition are Calabi-Yau 3-folds. Let  $f(x_1,x_2,x_3,x_4,x_5), g(x_1,x_2,x_3,x_4,x_5)$ be two quadratic homogenous polynomials that form a complete intersection. In the first global example, we take
$Y_1:=(2,4)\mbox{ complete intersection in }\PP^5$ given by the following two equations:
\[f(x_1,x_2,x_3,x_4,x_5)+tx_0^2=0,\]
\[x_0^2(g(x_1,x_2,x_3,x_4,x_5)+tx_0^2)+x_0g_1(x_1,x_2,x_3,x_4,x_5)+g_2(x_1,x_2,x_3,x_4,x_5)=0.\]
where $g_i$ is a generic homogenous polynomial in degree $(2+i)$. A simple deformation of $Y_1$ is the following singular variety $\overline{Y_1}$ in $\PP^5$ given by
\[f(x_1,x_2,x_3,x_4,x_5)=0,\]
\[x_0^2g(x_1,x_2,x_3,x_4,x_5)+x_0g_1(x_1,x_2,x_3,x_4,x_5)+g_2(x_1,x_2,x_3,x_4,x_5)=0.\]
By choosing $g_1$ and $g_2$ sufficiently general, one may assume $\overline{Y_1}$ has a unique singularity at $[1:0:0:0:0:0]$ given by a (2,2) complete intersection. Now we take $X_1:=\mbox{Blow-up of }\overline{Y_1}\mbox{ at }[1:0:0:0:0:0]$. It is easy to check that $X_1\to \overline{Y_1}$ is a contraction with exceptional divisor being a (2,2)-complete intersection in $\PP^4$, i.e. a Del-Pezzo surface of degree 4. Thus the passage from $X_1$ to $Y_1$ is a global example.\\
~\\
The second global example is obtained in a similar fashion. We take $Y_2$ to be a $(3,3)$ complete intersection in $\PP^5$ given by the following two equations:
\[x_0(f(x_1,x_2,x_3,x_4,x_5)+tx_0^2)+f_1(x_1,x_2,x_3,x_4,x_5)=0,\]
\[x_0(g(x_1,x_2,x_3,x_4,x_5)+tx_0^2)+g_1(x_1,x_2,x_3,x_4,x_5)=0.\]
where $f_1,g_1$ are both generic cubic homogenous polynomials. A simple deformation of $Y_2$ is the following singular variety $\overline{ Y_2}$ in $\PP^5$ given by
\[x_0f(x_1,x_2,x_3,x_4,x_5)+f_1(x_1,x_2,x_3,x_4,x_5)=0,\]
\[x_0g(x_1,x_2,x_3,x_4,x_5)+g_1(x_1,x_2,x_3,x_4,x_5)=0.\]
By choosing $f_1$ and $g_1$ sufficiently general, one may assume $\overline Y_2$ has a unique singularity at $[1:0:0:0:0:0]$ given by a (2,2) complete intersection. Now we take $X_1:=\mbox{Blow-up of }\overline Y_2\mbox{ at }[1:0:0:0:0:0]$. The passage from $X_2$ to $Y_2$ is another global example that we are interested in.\\
~\\
\textbf{The ambient part quantum $D$-modules.} To every complete intersection $Z$ in a projective toric variety $P$ (or more generally, a GIT quotient), one may associate a $D$-module $\cH(Z)$\footnote{This notation is used when the ambient variety $P$ is understood in the context.} to the pair $(Z,P)$, which encodes the genus zero ambient part Gromov-Witten theory of $Z$.

There are many different ways to define such a quantum $D$-module. For our purpose in this paper, we will use the definition given in \cite{Mi:2017zwz}. Let $Z$ be given as a generic zero section of a vector bundle $\cV$ over $P$. Let $\{T^i\}$ be a basis of $H^*(P)$ with $T^0=1$, and $T^1,\cdots, T^r$ form a basis of $H^2(P)$. We begin with following generating series, introduced in Coates-Givental's work \cite{MR2276766}, which is usually called the twisted $J$-function for the pair $(P,\cV)$.
\[J_\mathrm{big}^\cV(t,z^{-1}):=1+\frac{t}{z}+\sum_{n=0}^\infty\frac{Q^d}{n!}(\ev_{n+1})_*\left(\frac{e(\cV'_{0,n+1,d})}{z(z-\psi_{n+1})}\prod_{i=1}^n\ev^*_i t\right),\]
where $T=\sum t_iT^i$ is a general cohomology class in $P$, $\cV'_{0,n+1,d}$ is the kernel of the evaluation map $R^0\pi_*\ev^*_{n+2}\cV\to \ev_{n+1}^*\cV$ and $\pi:\cM_{0,n+2}(P,\beta)\to \cM_{0,n+1}(P,\beta)$ forgets the last marking.
Setting $t_{r+1}=\cdots=t_m=0$, $Q\equiv 1$ and $q:=(q_i)=(e^{t_i})$ for $i=1,\cdots,r$, we obtain the small $J$-function $J_\mathrm{sm}^\cV(t_0,q,z^{-1})$. 

Let $\DD_q$ be the ring of differential operators generated by $z,\delta_{q_i},q_j^{\pm 1}$ where $\delta_{q_i}$ represents the log differential operators $q_i\partial_{q_i}$.
Then we define the {\em ambient part quantum $D$-module} $\cH(Z)$ to be the cyclic $\DD_q$-module generated by $\mathbf{e}(\cV)J_\mathrm{sm}^\cV(0,q,z^{-1})$. 

We introduce the notion of quantum $D$-module this way because it is more relatable to the Mirror Theorem, which allows us to work directly with the (twisted) $I$-functions. In general, $I$-functions are explicitly cohomology-valued hypergeometric series, which arises as solutions to certain GKZ system attached to the toric data.\\
~\\
\textbf{Main Theorem.} In this paper, we will verify \Cref{myconj} for the local model as well as two global examples mentioned above. Our main result is the following.
\begin{thm}
	\Cref{myconj} holds for the following cases:
	\begin{enumerate}
		\item 	$X':=\PP(K_E\oplus \cO)$, and $Y':=(2,2)\mbox{ complete intersection in }\PP^4$ (=Theorem 3.4);
		\item The global examples $\{X_1,Y_1\}$ and $\{X_2,Y_2\}$ (=Theorem 4.4, Theorem 5.4).
	\end{enumerate}
\end{thm}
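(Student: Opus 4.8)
The plan is to establish each of the three cases by an explicit computation with twisted $I$-functions, following the strategy developed for the degree-3 transition in \cite{Mi:2017zwz}. In every case the quantum $D$-module is cyclic and, by the Mirror Theorem of Coates--Givental, its generator $\mathbf{e}(\cV)J^\cV_{\mathrm{sm}}$ is obtained from the corresponding $I$-function by a mirror-map change of variables; the $D$-module structure is then encoded in the Picard--Fuchs operators annihilating that $I$-function. It therefore suffices to carry out the analytic continuation at the level of $I$-functions and to match the annihilating operators on the two sides.

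First I would treat the local model. Since $E$ is the degree-4 del Pezzo surface one has $K_E\simeq\cO_E(-1)$, so $X'=\PP(\cO_E(-1)\oplus\cO_E)$ is a $\PP^1$-bundle over the $(2,2)$ complete intersection $E\subset\PP^4$. I would realize $X'$ as the zero locus of a bundle $\cV$ inside an ambient GIT quotient (a $\PP^1$-bundle over $\PP^4$), so that its small twisted $I$-function $I_{X'}(q_1,q_2,z^{-1})$ is an explicit cohomology-valued hypergeometric series in two K\"ahler variables: $q_1$ attached to the hyperplane class pulled back from $\PP^4$ and $q_2$ attached to the fiber (relative hyperplane) class of the bundle. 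On the other side, $Y'=(2,2)$ complete intersection in $\PP^5$ carries the standard hypergeometric $I$-function $I_{Y'}(q,z^{-1})$ in the single K\"ahler variable $q$ dual to the hyperplane of $\PP^5$.

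The heart of the argument is the analytic continuation. The transition divisor $E\hookrightarrow X'$ sits at a boundary locus of the extended K\"ahler moduli, and I would analytically continue $I_{X'}$ from the large-radius chart to a neighborhood of this locus, then decompose the continued series according to the monodromy of the fiber variable $q_2$ around $E$. The submodule $\bar{\cH}^E(X')$ is spanned by the components carrying trivial $E$-monodromy; restricting these to $E$ (that is, specializing the fiber variable along the section defining $E$) should reproduce, after the appropriate mirror map, exactly the hypergeometric factors of $I_{Y'}$. Matching the generators and the annihilating differential operators in $\DD_q$ then yields the isomorphism $\bar{\cH}^E(X')|_E\simeq\cH(Y')$, while a dimension count of the trivial-monodromy subspace verifies the maximality clause of \Cref{myconj}.

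For the two global examples the scheme is identical, with the local $I$-function replaced by the global complete-intersection $I$-functions of $X_i$ and $Y_i$; here the ambient GIT quotient carries additional K\"ahler parameters, so the analytic continuation is performed with the remaining global parameters held fixed and the degree-4 local computation is inserted along the exceptional locus. I expect the main obstacle throughout to be the analytic continuation itself: producing an integral or hypergeometric representation of $I_{X'}$ that can be continued past the transition locus, controlling the monodromy of the fiber variable around $E$, and pinning down precisely which linear combinations of the continued components constitute the maximal trivial-monodromy submodule. Once the continuation is made explicit, the remaining identification of Picard--Fuchs operators and mirror maps is a lengthy but essentially routine verification.
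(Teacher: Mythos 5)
Your overall architecture---identify $\cH(X')$ and $\cH(Y')$ with the cyclic $D$-modules generated by explicit twisted $I$-functions, analytically continue, isolate the trivial-monodromy submodule, restrict, and match Picard--Fuchs operators---is exactly the paper's. But the step you yourself flag as the main obstacle, the continuation, is misidentified in a way that would make the argument fail. You propose to decompose the continued series according to the monodromy of the fiber variable $q_2$ and to restrict by ``specializing the fiber variable along the section defining $E$.'' The curve classes contracted by the transition are the curves inside the del Pezzo surface, and these have $\xi$-degree zero and positive $h$-degree, so it is the base variable $q_1$, not $q_2$, that degenerates at the transition. The paper's continuation (inspired by Lee--Lin--Wang) is the change of variables $x=q_1^{-1}$, $y=q_1q_2$: the transition divisor is $\{x=0\}$ in the extended K\"ahler moduli, the monodromy condition is around $x=0$, and the K\"ahler parameter of $Y'$ is the mixed monomial $y=q_1q_2$, recovered via $I^{Y'}(y)=\lim_{x\to 0}\bar I^{Y'}(x,y)$. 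Continuing in $q_2$ with $q_1$ fixed produces nothing of the sort. Relatedly, your ``dimension count'' for maximality cannot be done in the abstract: the paper must exhibit the two missing solutions explicitly, as series $x^{\frac12+u}\sum_{i,j}C_{i,j}x^iy^j$ taken modulo $u^2$, whose $x^{1/2}$ and $\log x$ factors show their monodromy around $x=0$ is nontrivial; without constructing them one only knows the trivial-monodromy subspace is at least, not exactly, $4$-dimensional.

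Your picture of the global examples is also off. $X_1$ and $X_2$ are complete intersections in the \emph{same} ambient space $\PP(\cO_{\PP^4}(-1)\oplus\cO_{\PP^4})$ as the local model, with exactly two K\"ahler parameters; there are no ``additional global parameters held fixed'' and no insertion of the local computation along the exceptional locus. The paper simply reruns the two-variable computation with the new bundles ($\cO(2h)\oplus\cO(2h+2\xi)$ and $\cO(2h+\xi)^{\oplus 2}$). The one genuinely new wrinkle there, which your plan does not anticipate, is that the two GKZ generators $\triangle_1$ (order $5$) and $\triangle_2$ (order $2$) no longer bound the solution space by $6$; one must extract a third order-$3$ operator $\cL$ from a factorization identity such as $2\triangle_1+\delta_{q_1}^2\delta_{q_2}\triangle_2=(\delta_{q_1}+\delta_{q_2})\delta_{q_1}\cL$ before the basis-of-solutions and rank arguments can run.
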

We remark that the above theorem essentially follows from more explicit relationships between their $I-$functions. Let $I^X(q_1,q_2)$ the $I-$functions for $X'$ (resp. $X_1$ or $X_2$) be $I(q_1,q_2)$, for $Y'$ (resp. $Y_1$ or $Y_2$) be $I^Y(y)$. Then one may perform an analytic continuation for $I^X(q_1,q_2)$ to obtain a hypergeometric series $\bar I ^X(x,y)$, while applying the change of variable $x\mapsto q_1^{-1}$ and $y\mapsto q_1q_2$. Then the $I$-function for $Y'$ (resp. $Y_1$ or $Y_2$) is recovered by
\[I^Y(y)=\lim_{x\to 0}\bar I^X(x,y).\]
We will discuss this further in subsequent sections.\\
~\\
\textbf{Plan of the paper.} In Section 2, we begin by introducing the basic definition and background of the quantum $D$-module used in our paper. Then in Section 3, we prove the \Cref{myconj} for the local model. Finally, in Section 4 and 5, we prove \Cref{myconj} for $\{X_1,Y_1\}$ and $\{X_2,Y_2\}$, respectively.\\
~\\
\textbf{Acknowledgement.} I am very grateful to Professors Yongbin Ruan and Yuan-Pin Lee for insightful discussions. I would also like to thank Professors Mark Gross and  Albrecht Klemm for helpful correspondence. Part of this work was done during my visit to the Department of Mathematics at the University of Utah. I would like to thank their kind support and hospitality.

\section{Ambient part quantum $D$-modules}

In this section, we will introduce the language used in our theorem, namely, ambient part quantum $D$-modules. As our target varieties are all complete intersections in toric varieties,  we will also have a short discussion of the roles of $I$-functions played in this picuture. Genereal references on these topics are \cite{MR1677117,MR1492534,MR2003030}.

Let $P$ be a projective toric variety and $Z$ is a complete intersection insider of it. Namely, $Z$ is the zero locus of a section of a vector bundle $\cV$ over $P$. Assume $\cV$ splits, i.e. $\cV=\oplus_{i=1}^r L_i$. 

Let $\cM_{0,n}(P,\beta)$ denote the moduli space of genus-zero stable maps to $P$, where the source curve is a rational nodal curve with $n$-markings and the image is of type $\beta\in H_2(P)$. There exists a virtual fundamental class $[\cM_{0,n}(P,\beta)]^{vir}$. The Gromov-Witten invariants are basically the intersection numbers on $\cM_{0,n}(P,\beta)$. 

In \cite{MR2276766}, the notion of $(\mathbf{e}, \cV)$-twisted Gromov-Witten invariants is introduced. Let $\{T^i\}_{i=0}^N$ be a basis of $H^*(P)$ with $T^0=1$, and $T^1,\cdots, T^r$ form a basis of $H^2(P)$. We consider the following twisted $J$-function
\[J^\cV(t,z^{-1}):=1+\frac{T}{z}+\sum_{n=0}^\infty\frac{Q^d}{n!}(\ev_{n+1})_*\left(\frac{\cV'_{0,n+1,d}}{z(z-\psi_{n+1})}\prod_{i=1}^n\ev^*_i T\right),\]
where $t=\sum t_iT^i$ and $\cV'_{0,n+1,d}$ is the kernel of the map $R^0\pi_*\ev^*_{n+1}\cV\to \ev_{n+1}^*\cV$ and $\pi:\cM_{0,n+1}(P,\beta)\to \cM_{0,n}(P,\beta)$ forgets the last marking. Restricting the above function to $t_{m+1}=\cdots=t_{N}=0$ and set $Q\equiv 1$, $(q):=(q_i)_{i=1}^r=(e^{t_i})_{i=1}^r$, we obtain the twisted small $J$-function $J_{\mathrm{sm}}^{\cV}(t_0,q,z^{-1})$. 

When $c(Z)\geqslant 0$, the mirror theorem in \cite{MR1653024} asserts that
$J^{\cV}(\tau,z^{-1})$ is equal to $I^{\cV}(t,z^{-1})$ up to a mirror transformation $t\mapsto \tau(t)$, where $I^{\cV}(t,z^{-1})$ is an explicit cohomology-valued hypergeometric power series. If $c(Z)\geqslant 0$ fails, then a procedure of Birkhoff factorization is also needed to recover the function $J^{\cV}(t,z^{-1})$ from the twisted $I$-function $I^{\cV}(t,z,z^{-1})$. Therefore, there is a natural identification of the cyclic $D$-module attached to $J^{\cV}(t,z^{-1})$ and $I^{\cV}(t,z,z^{-1})$ (cf. \cite{MR3751817}).

Our definition for the ambient part quantum $D$-module is as follows:
\begin{definition}
	The {\em ambient part quantum $D$-module} of $Z$ is the cyclic $D$-module attached to $\mathbf{e}(\cV)J_\mathrm{sm}^\cV(0,q,z^{-1})$, denoted by $\cH(Z)$ when the ambient variety is understood.
\end{definition}

By Mirror theorem, this can be identified with the cyclic $D$-module generated by the small twisted $I$-function $\mathbf{e}(\cV)I^{\cV}(q,z,z^{-1})$, which arises as solutions to the GKZ system attached to the toric data of $(P,\cV)$ (cf. \cite{MR1677117}). 
\begin{remark}
	By a reconstruction theorem \cite{MR2102400}, when $H^{*}(P)$ is generated by divisors, or when $Z$ is Calabi-Yau, the genus-zero ambient part Gromov-Witten invariants of $Z$ can be recovered from its $J$-function $J^{Z}(t,z^{-1})$. It is related to the $(\mathbf{e},\cV)$-twisted $J$-function $J^{\cV}(t,z^{-1})$ by
\[\mathbf{e}(\cV)J^\cV(t,z^{-1})=\iota_*J^Z(\iota^*t,z^{-1}),\]
where $\iota:Z\hookrightarrow P$ is the inclusion. That's why we consider the cyclic $D$-module attached to $\mathbf{e}(\cV)J^{\cV}(t,z^{-1})$ rather than $J^{\cV}(t,z^{-1})$. A closely related definition of ambient part quantum $D$-modules is introduced by  Mann-Mignon in \cite{MR3663796}. 
\end{remark}

\section{The local model}

In this section, we begin to study the change of quantum $D$-modules associted to the local model of degree-4 transition. Let $E$ be a Del-Pezzo surface in degree 4, which embeds into $\PP^4$ as a $(2,2)$ complete intersection. As described in the intorduction, the local model $\{X',Y'\}$ is the following:

\[X':=\PP(K_E\oplus \cO),\quad Y':=(2,2)\mbox{ complete intersection in }\PP^5.\]

Let $i:E\hookrightarrow \PP^4$ be the embedding. By adjuction formula, we have
\[K_E= i^*(K_{\PP^4}\otimes \cO_{\PP^4}(2)\otimes \cO_{\PP^4}(2))=i^*\cO_{\PP^4}(-1).\]

Thus $X'$ is embedded into $\PP(\cO_{\PP^4}(-1)\oplus \cO_{\PP^4})$ as a complete intersection. Let $\pi: \PP(\cO_{\PP^4}(-1)\oplus \cO_{\PP^4})\to \PP^4$ be the natural projection, then $X'$ can be viewed as the zero locus of a section of $\pi^*(\cO_{\PP^4}(2)\oplus \cO_{\PP^4}(2))$. On the other hand, $Y'$ is the zero locus of a section of $\cO_{\PP^5}(2)\oplus \cO_{\PP^5}(2)$.\\

We will adopt the following notations in this section.
\begin{itemize}
	\item $h:=c_1(\pi^*\cO_{\PP^4}(1)$), corresponding to small parameter $q_1$
	\item Let $\cO_\PP(1)$ be the anti-tautological bundle over $\PP(\cO_{\PP^4}(-1)\oplus \cO_{\PP^4})$, and $\xi:=c_1(\cO_\PP(1))$, corresponding to small parameter $q_2$.
	\item $p:=c_1(\cO_{\PP^5}(1))$, corresponding to small parameter $y$.
\end{itemize}

According to the combinatorical data of $\PP(\cO_{\PP^4}(-1)\oplus \cO_{\PP^4})$ and $\PP^5$, it is straightforward to write down the twisted $I$-functions for $X'$ and $Y'$ as follows.
\[I^{X'}(q_1,q_2): =(2h)(2h)q_1^{h/z}q_2^{\xi/z}\sum_{(d_1,d_2)\in\NN^2} q_1^{d_1}q_2^{d_2}\frac{\prod^0\limits_{m=-\infty} (\xi-h+mz)\prod\limits_{m=1}^{2d_1}(2h+mz)^2}{\prod^{d_1}\limits_{m=1}(h+mz)^5\prod^{d_2}\limits_{m=1}(\xi+mz)\prod^{d_2-d_1}\limits_{m=-\infty}(\xi-h+mz)},\]
subject to the relation $h^5=\xi(\xi-h)=0$,
\[I^{Y'}(y):=(2p)(2p)y^{p/z}\sum_{j\in \NN} \frac{\prod\limits_{m=1}^{2j}(2p+mz)^2}{\prod\limits_{m=1}^{j}(p+mz)^6},\]
subject to the relation $p^6=0$.

To compare these two functions, it is helpful to introduce the following auxcillary hypergeometric series:
\[\bar I^{Y'}(x,y):=(2p)(2p)y^{p/z}\sum_{(i,j)\in \NN^2}x^iy^j\frac{\prod\limits_{m=-\infty}^{0}(p+mz)^5\prod\limits_{m=-\infty}^{2j-2i}(2p+mz)^2}{\prod\limits_{m=-\infty}^{j-i}(p+mz)^5\prod\limits_{m=1}^{j}(p+mz)\prod\limits_{m=1}^{i}(mz)\prod\limits_{m=-\infty}^{0}(2p+mz)^2}.\]
We note that $\bar I^{Y'}(x,y)$ involves two variables $x,y$. It's easy to check that $\bar I^{Y'}$ is a holomorphic function on a small domain minus the origin. We also observe that $\bar I^{Y'}$ has trivial monodromy around $x=0$, thus it makes sense to take the limit $x\to 0$, we obtain
\[\lim_{x\to 0}\bar I^{Y'}(x,y)=I^{Y'}(y),\]
which precisely recovers the twisted $I$-function for $Y'$.

\begin{lemma}
The components of $I^{X'}(q_1,q_2)$ comprise a basis of solutions to the differential equation system $\{\triangle_1I=\triangle_2I=0\}$ at any point around the origin in $(\CC^*)^2$, where
\[\triangle_1:=(z\delta_{q_1})^3-4q_1(2z\delta_{q_1}+z)^2,\]
\[\triangle_2:=z\delta_{q_2}(z\delta_{q_2}-z\delta_{q_1})-q_2.\]
\end{lemma}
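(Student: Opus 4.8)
The plan is to prove the statement in three stages: show that every component of $I^{X'}$ is annihilated by both $\triangle_1$ and $\triangle_2$; show that these components are linearly independent as (multivalued) functions near a chosen base point in $(\CC^*)^2$; and show that the solution space of the system has dimension exactly equal to the number of independent components. As preparation I would fix the cohomology ring $H^*(\PP(\cO_{\PP^4}(-1)\oplus\cO_{\PP^4}))\simeq \CC[h,\xi]/(h^5,\xi(\xi-h))$, which is $10$-dimensional with basis $1,h,\dots,h^4,\xi,\xi h,\dots,\xi h^4$. Since $I^{X'}$ carries the prefactor $\mathbf{e}(\cV)=(2h)(2h)=4h^2$, all of its components land in the image of multiplication by $h^2$, which is spanned by the six classes $h^2,h^3,h^4,\xi h^2,\xi h^3,\xi h^4$; this matches $\dim H^*_{\mathrm{amb}}(X')=6$. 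So there are six nonzero components, and the target dimension of the solution space is $6$.

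To show the components solve the system I would apply each operator directly to the cohomology-valued series and read off the recursions among the hypergeometric coefficients. Writing $A_{d_1,d_2}$ for the coefficient of $q_1^{d_1}q_2^{d_2}$ past the $q_1^{h/z}q_2^{\xi/z}$ factor, the operator $z\delta_{q_1}$ (resp.\ $z\delta_{q_2}$) acts on the $(d_1,d_2)$-term by multiplication by $h+d_1z$ (resp.\ $\xi+d_2z$), while $q_i$ shifts the summation index. For $\triangle_2$ one checks the one-step ratio $A_{d_1,d_2-1}/A_{d_1,d_2}=(\xi+d_2z)(\xi-h+(d_2-d_1)z)$, so the interior terms cancel; the boundary term at $d_2=0$ carries the factor $\xi(\xi-h)$ and vanishes by the relation $\xi(\xi-h)=0$. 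For $\triangle_1$ one uses the $d_1$-recursion coming from the twist factors $\prod(2h+mz)^2$ against $\prod(h+mz)^5$ together with the regularized factor in $\xi-h$; the boundary term at $d_1=0$ produces a factor $h^3$, which combined with the prefactor $(2h)^2$ gives $h^5=0$. Because the basis classes are linearly independent in cohomology, the vanishing of $\triangle_j I^{X'}$ forces each scalar component to be annihilated.

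Linear independence is immediate from the leading asymptotics: the components have leading terms $q_1^{h/z}q_2^{\xi/z}(2h)^2T^i$ with $T^i$ running over the six distinct classes, which carry distinct powers of $\log q_1$ and $\log q_2$ and are therefore independent. To pin down the dimension of the solution space I would compute the holonomic rank of $\{\triangle_1,\triangle_2\}$ over $\CC(q_1,q_2)$ by a normal-form argument: $\triangle_2$ has invertible leading coefficient on $\delta_{q_2}^2$, allowing reduction of all $\delta_{q_2}^{\geq 2}$, while $\triangle_1$ has invertible leading coefficient on $\delta_{q_1}^3$, allowing reduction of all $\delta_{q_1}^{\geq 3}$. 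The residual monomial basis $\{\delta_{q_1}^a\delta_{q_2}^b: 0\le a\le 2,\ 0\le b\le 1\}$ has six elements, so the rank is $6$; equivalently, the indicial analysis at the regular singular point $q_1=q_2=0$ gives exponents governed by $\rho_1^3=0$ and $\rho_2(\rho_2-\rho_1)=0$, a $3\times 2$ system of logarithmic solutions. Combined with the first two stages, the six components form a basis at any point near the origin in $(\CC^*)^2$.

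The step I expect to be the main obstacle is the rank computation: one must verify that $\triangle_1$ and $\triangle_2$ genuinely generate a holonomic ideal of rank exactly $6$, i.e.\ that their leading symbols form a complete intersection and that the two operators reduce to a Gr\"obner basis with no new leading term produced by their S-polynomial, so that no hidden relation lowers the count. The other delicate point is bookkeeping the regularized infinite products $\prod_{m=-\infty}^{\cdots}(\xi-h+mz)$ appearing when $d_2-d_1<0$; these encode the analytic continuation and must be manipulated carefully to make the $\triangle_1$- and $\triangle_2$-recursions and their boundary cancellations fully rigorous.
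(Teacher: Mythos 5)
Your three-stage plan (verify the coefficient recursions with boundary terms killed by $h^5=\xi(\xi-h)=0$, deduce linear independence from the logarithmic leading terms, and cap the solution space at dimension $6$ by a rank computation) is exactly the skeleton of the paper's proof, and your count of six components and your verification of $\triangle_2$ are correct. The gap is in the step you describe only in passing for $\triangle_1$: the assertion that the $d_1$-recursion ``together with the regularized factor in $\xi-h$'' produces the required cancellation. If you actually carry this out, it fails. Writing $I^{X'}=4h^2\,q_1^{h/z}q_2^{\xi/z}\sum_{(d_1,d_2)}A_{d_1,d_2}q_1^{d_1}q_2^{d_2}$, the genuine $d_1$-recursion is
\[
(h+d_1z)^5A_{d_1,d_2}=(2h+(2d_1-1)z)^2(2h+2d_1z)^2\bigl(\xi-h+(d_2-d_1+1)z\bigr)A_{d_1-1,d_2},
\]
and since $(2h+2d_1z)^2=4(h+d_1z)^2$ with $h+d_1z$ invertible for $d_1\geq 1$ ($h$ is nilpotent), it is equivalent to
\[
(h+d_1z)^3A_{d_1,d_2}=4(2h+(2d_1-1)z)^2\bigl(\xi-h+(d_2-d_1+1)z\bigr)A_{d_1-1,d_2}.
\]
The factor $\xi-h+(d_2-d_1+1)z$ does not cancel; it corresponds to an operator factor $(z\delta_{q_2}-z\delta_{q_1})$ that is absent from $\triangle_1$ as stated. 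Concretely, $A_{1,0}=(\xi-h)(2h+z)^2(2h+2z)^2/(h+z)^5$ gives $(h+z)^3A_{1,0}=4(\xi-h)(2h+z)^2$, so the $q_1^{1}q_2^{0}$-coefficient of $\triangle_1I^{X'}$ (after stripping the invertible prefactor $q_1^{h/z}q_2^{\xi/z}$) equals $16h^2(2h+z)^2\bigl((\xi-h)-1\bigr)$, which is nonzero in $\CC[h,\xi]/(h^5,\xi(\xi-h))$ because its two pieces sit in different cohomological degrees. Hence at least one component of $I^{X'}$ is \emph{not} a solution of $\triangle_1I=0$, and no proof of the lemma exactly as printed can succeed.

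What is true, and what the paper evidently intends, is the statement with
\[
\triangle_1=(z\delta_{q_1})^3-4q_1(2z\delta_{q_1}+z)^2(z\delta_{q_2}-z\delta_{q_1}),
\]
for which the displayed recursion is precisely the $\triangle_1$-recursion when $d_1\geq1$, the boundary case $d_1=0$ is killed by $h^5=0$ as you say, and the order-$5$ GKZ generator factors as $P_1=(z\delta_{q_1})^2\triangle_1$ --- the exact analogue of the factorizations (4.1) and (5.1) in Sections 4 and 5, whose operators do contain the corresponding factor $(\delta_{q_2}-\delta_{q_1}+1)$, resp.\ $(\delta_{q_2}-\delta_{q_1})$. (The same correction propagates to Lemma 3.2, where $\triangle_1'$ needs a right factor $z\delta_x$.) Two further remarks on your plan after this correction: your indicial analysis at the origin is unaffected, since the added factor sits in the term carrying an explicit $q_1$, so the exponents are still governed by $\rho_1^3=0$ and $\rho_2(\rho_2-\rho_1)=0$; but your normal-form rank argument must be redone, because the $q_1$-part of the corrected $\triangle_1$ has full order $3$, with leading part $z^3\bigl((1+16q_1)\delta_{q_1}^3-16q_1\delta_{q_1}^2\delta_{q_2}\bigr)$, so the reduction of $\delta_{q_1}^{\geq3}$ and the S-pair/confluence check you flagged as the main obstacle become genuinely necessary rather than routine. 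In short, the proposal's stated verification asserts a cancellation that does not occur; a careful execution of your own first stage would have uncovered the missing factor, which is the one essential point on which both your write-up and the paper's printed formula need repair.
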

\begin{proof}
	First we write 
	$$I^{X'}(q_1,q_2) =(2h)(2h)q_1^{h/z}q_2^{\xi/z}\sum_{(d_1,d_2)\in\NN^2}q_1^{d_1}q_2^{d_2}A_{d_1,d_2}.$$ These two differential operators are obtained precisely by the recursion reltaions between $A_{d_1,d_2}$ and $A_{d_1+1,d_2}$ (or $A_{d_1,d_2+1}$), and the cohomology relation $h^5=\xi(\xi-h)=0$ amounts to the fact that $A_{d_1,d_2}=0$ unless $(d_1,d_2)\in \NN^2$. The components of $I^{X'}(q_1,q_2)$ give rise to 6 linearly-independent solutions to the differential equation system. On the other hand, this differential equation system can have at most 6-dimensional solution space due to a holonomic rank computation. Hence the lemma follows.
\end{proof}
Insipred by the work of Lee-Lin-Wang\cite{MR3751817}, we apply the following change of variable to the above differential equation system
\[q_1\mapsto x^{-1}, \quad q_2\mapsto xy.\]
Then we have the relation
\[\delta_{q_1}=\delta_{y}-\delta_{x},\quad \delta_{q_2}=\delta_{y}.\]
Let $\triangle_1'$, $\triangle_2'$ denote the differential operators obtained by applying the above change of variable, then we have the following lemma
\begin{lemma}
	The components of $\bar I^{Y'}(x,y)$ comprise 4 linearly independent solutions to the differential equation system $\{\triangle'_1I=\triangle'_2I=0\}$ at any point around the origin in $(\CC^*)^2$, where
	\[\triangle'_1:=x(z\delta_{y}-z\delta_{x})^3-4(2(z\delta_y-z\delta_x)+z)^2],\]
	\[\triangle'_2:=(z\delta_{y})(z\delta_{x})-xy.\]
\end{lemma}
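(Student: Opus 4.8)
The plan is to mirror the strategy of the previous lemma: exhibit the components of $\bar I^{Y'}(x,y)$ as explicit solutions to $\{\triangle'_1 I = \triangle'_2 I = 0\}$, and then argue that there are exactly four of them and that they are linearly independent. Since $\triangle'_1, \triangle'_2$ are obtained from $\triangle_1, \triangle_2$ purely by the substitution $q_1 \mapsto x^{-1}$, $q_2 \mapsto xy$ (with the operator dictionary $\delta_{q_1} = \delta_y - \delta_x$, $\delta_{q_2} = \delta_y$ recorded above), the two systems are formally related by analytic continuation. I would first note that $\bar I^{Y'}$ is, up to the change of variables, the analytic continuation of $I^{X'}$ across the relevant wall in the extended Kähler moduli: writing $I^{X'} = (2h)(2h)q_1^{h/z}q_2^{\xi/z}\sum A_{d_1,d_2}q_1^{d_1}q_2^{d_2}$ as in Lemma~3.1, the substitution turns the summand into a series in $x,y$, and the expression for $\bar I^{Y'}$ displayed above is precisely this continued series reorganized with summation indices $(i,j)$.

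The concrete verification I would carry out is to check directly that $\bar I^{Y'}$ solves the transformed system. For $\triangle'_2$, write $\bar I^{Y'} = (2p)(2p)y^{p/z}\sum_{(i,j)} B_{i,j}\, x^i y^j$ and observe that $z\delta_x$ and $z\delta_y$ act on the summand by bringing down factors $(iz)$ and $(p + jz)$ respectively; the shape of the denominator $\prod_{m=1}^{i}(mz)$ and $\prod_{m=1}^{j}(p+mz)$ is exactly engineered so that $(z\delta_y)(z\delta_x)B_{i,j} = B_{i-1,j-1}\cdot(\text{shift})$, which matches the $xy$ shift in $\triangle'_2 = (z\delta_y)(z\delta_x) - xy$. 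For $\triangle'_1$, the same bookkeeping with the factors $\prod(p+mz)^5$ and $\prod(2p+mz)^2$ in the numerator and denominator produces the recursion encoded by $x(z\delta_y - z\delta_x)^3 - 4(2(z\delta_y - z\delta_x)+z)^2$. The relation $p^6 = 0$ (equivalently, that $B_{i,j}$ vanishes outside the admissible cone) then truncates the solution correctly. This is the routine computational core, parallel to Lemma~3.1.

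The counting step is where I expect the genuine content to lie. In Lemma~3.1 the system had a six-dimensional solution space and all six components of $I^{X'}$ were independent; here the claim is that $\bar I^{Y'}$ yields only \emph{four} independent solutions, so the restriction $x \to 0$ has dropped the rank by two. I would prove linear independence of the four components of $\bar I^{Y'}$ by examining their leading asymptotics in $p$ (the powers $p^0, p^1, p^2, p^3$ after clearing the normalizing factor $(2p)(2p)$), exactly as the $p^6 = 0$ truncation suggests a four-dimensional ambient cohomology $H^*(Y')$. To see that there are no more than four, I would compute the holonomic rank of $\{\triangle'_1 I = \triangle'_2 I = 0\}$ near the origin in the $(x,y)$ chart; the subtlety is that $\triangle'_1$ acquires the factor $x$ in its leading term, so the system is no longer of the generic holonomic type and the rank drops at $x = 0$. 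The main obstacle will be carrying out this rank computation carefully at the degenerate locus $x=0$: one must check that the two extra solutions present in the six-dimensional $X'$-system (those with nontrivial $x$-monodromy, e.g. logarithmic in $x$) genuinely fail to persist as honest power-series solutions, so that precisely the four monodromy-invariant components of $\bar I^{Y'}$ survive. Establishing this — rather than the formal recursion-matching — is the crux of the lemma and the step I would expect to demand the most care.
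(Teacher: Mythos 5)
Your first two paragraphs do what the lemma actually requires, and in essentially the way the paper does it: the paper's own proof is a one-line ``this is straightforward to check,'' meaning precisely the recursion-matching you describe (the operators $\triangle'_1,\triangle'_2$ translate into shift relations on the coefficients of $\bar I^{Y'}$, which the hypergeometric factors satisfy by construction), together with linear independence of the four nonvanishing components (those along $p^2,p^3,p^4,p^5$, after the prefactor $(2p)(2p)$ kills the $p^0,p^1$ parts), which is read off from their initial terms. Up to that point your proposal is correct and matches the paper.

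The third paragraph, however, rests on a misreading of the statement, and the program you outline there would fail. The lemma does not claim that the solution space of $\{\triangle'_1I=\triangle'_2I=0\}$ at a point of $(\CC^*)^2$ is four-dimensional; it only claims that the components of $\bar I^{Y'}$ furnish four independent solutions. In fact the solution space is still six-dimensional there: the substitution $q_1\mapsto x^{-1}$, $q_2\mapsto xy$ is an invertible change of coordinates on $(\CC^*)^2$, so it cannot change the holonomic rank, which Lemma 3.1 pins at $6$. The two ``missing'' solutions do persist at every point near the origin with $x\neq 0$ --- the paper constructs them explicitly in the next lemma (Lemma 3.3) as $I_5,I_6$, built from an $x^{\frac12+u}$ ansatz --- they merely fail to be single-valued around $x=0$, since they involve $x^{\frac12}$ and $\log x$. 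So the rank computation you propose, expecting the answer $4$, would return $6$; the assertion ``there are no more than four'' is false on $(\CC^*)^2$, and nothing in this lemma needs it. The dichotomy between the four monodromy-invariant solutions and the two solutions with nontrivial monodromy around $x=0$ is the content of Lemma 3.3 and the proof of Theorem 3.4, not of this lemma, so the step you identify as ``the crux'' does not belong here and, as formulated, cannot be carried out.
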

\begin{proof}
	This is straightfoward to check.
\end{proof}
To find the extra solutions to the differetial equation system $\{\triangle'_1I=\triangle'_2I=0\}$, we define $\bar I^{Y'}_{ext}(x,y)$ in the following way,
\[\bar I^{Y'}_{ext}(x,y)=x^{\frac12+u}\sum_{(i,j)\in\NN^2}x^iy^jC_{i,j},\tag{3.1}\]
where $\{C_{i,j}\}$ satisfies the following recursion relations for $(i,j)\in \ZZ^2$:
\[C_{i-1,j}(j-i+\frac12-u)^3z=16C_{i,j}(j-i-u)^2,\tag{3.2}\]
\[C_{i-1,j-1}=C_{i,j}(zj)(zi+\frac12+u).\tag{3.3}\] 
\begin{lemma}
	Let $I_5,I_6\in \CC[[x,y,\log x]][x^{\frac12}]$ be the components of $\bar I^{Y'}_{ext}(x,y)$ in the following sense
\begin{align*}
\pi:\CC[[x,y,u,\log x]][x^{\frac12}] &\longrightarrow \CC[[x,y,u,\log x]][x^{\frac12}]/(u^2),\\
x^{\frac12}e^{u\log x}\sum_{(i,j)\in\NN^2}x^iy^jC_{i,j} &\longmapsto I_5+I_6u,
\end{align*}
where $\pi$ is the obvious projection map, and $C_{i,j}$ are defined by the recursion relations (3.2) and (3.3) with initial condition $C_{0,0}=1$.
	Then $I_5$ and $I_6$, together with the components of $\bar I^{Y'}(x,y)$, comprise a basis of solutions to the differential equation system $\{\triangle'_1I=\triangle'_2I=0\}$ at any point around the origin in $(\CC^*)^2$.
\end{lemma}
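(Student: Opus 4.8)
The plan is to run the Frobenius method for the holonomic system $\{\triangle'_1 I=\triangle'_2 I=0\}$ at its regular singular locus $x=0$, at the resonant exponent $\rho=\tfrac12$. Writing $D:=z\delta_y-z\delta_x$, the operator $\triangle'_1=xD^3-4(2D+z)^2$ degenerates as $x\to0$ to $-4(2D+z)^2$, whose indicial polynomial in $x$ is $(1-2\rho)^2$; thus $\rho=\tfrac12$ is a \emph{double} root, and this is exactly what forces two extra solutions of the shape $x^{\frac12}(\cdots)$ and $x^{\frac12}\log x\,(\cdots)$. First I would apply each operator to a single monomial $x^{\frac12+u+i}y^j$ and read off the coefficient of $x^{\frac12+u+k}y^l$ in $\triangle'_1\bar I^{Y'}_{ext}$ and in $\triangle'_2\bar I^{Y'}_{ext}$: a direct check shows these coefficients vanish precisely when the $C_{i,j}$ obey the recursions (3.2) and (3.3). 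In other words (3.2) and (3.3) are nothing but the conditions that $\bar I^{Y'}_{ext}$ be annihilated by $\triangle'_1$ and $\triangle'_2$ respectively, and that a single array $\{C_{i,j}\}$ normalized by $C_{0,0}=1$ can satisfy both at once is the integrability of the system — the same compatibility underlying the earlier two lemmas.

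The crux is then a boundary/resonance computation. Feeding the recursions back in, every coefficient of $\triangle'_2\bar I^{Y'}_{ext}$ cancels, including the boundary terms at $i=0$ or $j=0$ (using the seed values), so $\triangle'_2\bar I^{Y'}_{ext}=0$. For $\triangle'_1$, the recursion (3.2) cancels every coefficient with $i\geq 1$, and the only surviving contribution is the boundary term at $i=0$, namely $-16z^2(j-u)^2 C_{0,j}\,x^{\frac12+u}y^j$. Since the seed relation forces $C_{0,j}=0$ for $j\geq1$, only the $j=0$ term remains, giving
\[\triangle'_1\bar I^{Y'}_{ext}=-16z^2u^2\,x^{\frac12+u}.\]
The factor $u^2$ is precisely the double root $(1-2\rho)^2=(2u)^2$ reappearing. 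Hence $\triangle'_1\bar I^{Y'}_{ext}\equiv\triangle'_2\bar I^{Y'}_{ext}\equiv 0\pmod{u^2}$.

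To finish I would apply $\pi$. Because $\triangle'_1$ and $\triangle'_2$ contain no $u$, they commute with the expansion in $u$; writing $\bar I^{Y'}_{ext}=I_5+I_6\,u+O(u^2)$ and using $\triangle'_i\bar I^{Y'}_{ext}=O(u^2)$ yields $\triangle'_1 I_5=\triangle'_1 I_6=\triangle'_2 I_5=\triangle'_2 I_6=0$, so $I_5$ and $I_6$ are genuine solutions. For the basis statement, note $I_5=x^{\frac12}(1+\cdots)$ and $I_6=(\log x)\,I_5+x^{\frac12}(\cdots)$ both carry the half-integer power $x^{\frac12}$, hence pick up a sign under $x\mapsto e^{2\pi i}x$, whereas the four components of $\bar I^{Y'}(x,y)$ are single-valued in $x$ (integral powers); moreover $I_6$ contains a $\log x$ absent from $I_5$. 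Thus the six functions are linearly independent. Finally the substitution $q_1\mapsto x^{-1},\,q_2\mapsto xy$ is an automorphism of $(\CC^*)^2$ and so preserves holonomic rank; by the rank-$6$ computation of the first lemma the solution space of $\{\triangle'_1,\triangle'_2\}$ is $6$-dimensional, so the four components of $\bar I^{Y'}$ together with $I_5,I_6$ form a basis.

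The main obstacle is the resonance bookkeeping of the second paragraph: verifying that the recursions annihilate all interior coefficients and that the whole obstruction collapses to the single term $-16z^2u^2x^{\frac12+u}$. This hinges both on the double indicial root and on the compatibility of (3.2) with (3.3) across the diagonal $i=j$, where the coefficient $(j-i-u)^2=u^2$ degenerates and the two recursions must be reconciled modulo $u^2$.
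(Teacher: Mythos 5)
Your proposal is correct and follows essentially the same route as the paper: identify the recursions (3.2)--(3.3) as the coefficient-wise form of $\triangle'_1,\triangle'_2$ acting on the ansatz (3.1), observe that modulo $u^2$ all boundary obstructions vanish so that $I_5,I_6$ are genuine solutions, check linear independence from the leading/monodromy behavior, and conclude by the $6$-dimensional bound on the solution space inherited from Lemma 3.1 under the coordinate change. Your version simply makes explicit what the paper asserts tersely (the indicial double root at $\rho=\tfrac12$, the residual term $-16z^2u^2x^{\frac12+u}$, and rank invariance under the torus automorphism); the only slight imprecision is that $C_{0,j}$ for $j\geq 1$ vanishes modulo $u^2$ rather than identically, which does not affect the conclusion.
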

\begin{proof}
	Given the solution form (3.1), it is straightfoward to check that the recursion relation (3.1) and (3.2) correspond precisely to the differential operator $\triangle'_1$ and $\triangle'_2$, respectively. Choosing initial condition $C_{0,0}=1$, it follows that $C_{i,j}$ are uniquely determined for all $(i,j)\in \NN^2$. If we require $u^2=0$, we see that $C_{i,j}=0$ if $i<0$ or $j<0$. Thus $I_5$ and $I_6$ are solutions to the differential equation system $\{\triangle'_1I=\triangle'_2I=0\}$. It is clear that $I_5$, $I_6$, as well as the components of $\bar I^{Y'}(x,y)$, are all linearly-independent because of their initial terms. On the other hand, the differential equation system $\{\triangle'_1I=\triangle'_2I=0\}$ should have 6-dimensional solution space, hence the lemma is proved.
\end{proof}
Now we are ready to prove the main theorem in this section.
\begin{thm}
	The Conjecture 1.1 holds for the local model $\{X',Y'\}$, namely:
	one may perform analytic continuation of $\cH(X')$ over the extended K\"ahler moduli to obtain a $D$-module $\bar\cH(X')$, then there exists a divisor $E$ and a submodule $\bar\cH^E(X')\subseteq \bar H(X')$ with maximum trivial monodromy around $E$, such that
	\[\bar\cH^E(X')|_{E}\simeq \cH(Y),\]
	where $\bar\cH^E(X')$ is the restriction to $E$.
	\end{thm}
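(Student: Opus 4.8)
The plan is to run the entire argument at the level of $I$-functions and their monodromy, invoking the Mirror Theorem only to pass back to quantum $D$-modules at the two ends. First I would record the input. By the Mirror Theorem and Lemma~3.1, $\cH(X')$ is the cyclic $D$-module generated by $\mathbf{e}(\cV)I^{X'}(q_1,q_2)$, presented as $\DD_q/(\triangle_1,\triangle_2)$, with $6$-dimensional solution space spanned by the components of $I^{X'}$; here the Euler factor $(2h)^2$ is what cuts the naive rank $10$ of $H^*(\PP(\cO_{\PP^4}(-1)\oplus\cO))$ down to the six classes $h^2,h^3,h^4,h^2\xi,h^3\xi,h^4\xi$. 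Likewise $\cH(Y')$ is generated by $I^{Y'}(y)$, and because of the factor $(2p)^2$ its rank is $4$ (the classes $p^2,p^3,p^4,p^5$). The gap between $6$ and $4$ is precisely the rank discrepancy that Conjecture~1.1 resolves by passing to a trivial-monodromy submodule.

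Next I would realize the analytic continuation concretely through the substitution $q_1\mapsto x^{-1}$, $q_2\mapsto xy$, turning $(\triangle_1,\triangle_2)$ into $(\triangle'_1,\triangle'_2)$ and defining $\bar\cH(X'):=\DD/(\triangle'_1,\triangle'_2)$ over the extended K\"ahler moduli. Its $6$-dimensional solution space is the one described by Lemma~3.3: the four components of $\bar I^{Y'}(x,y)$ together with the two extra solutions $I_5,I_6$. Working in the function realization $\bar\cH(X')=\DD\cdot\overline{\mathbf{e}(\cV)I^{X'}}$, the monodromy $T$ around the transition divisor $E:=\{x=0\}$ commutes with the module action, since every operator in $\DD_q$ (built from $x,y,\delta_x,\delta_y,z$) is single-valued around $x=0$. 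Consequently the fixed part $\bar\cH(X')^{T=\mathrm{id}}$ is automatically a $D$-submodule, and it is by construction the largest submodule on which the $E$-monodromy is trivial.

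The heart of the matter is to identify this fixed submodule. The components of $\bar I^{Y'}$ are power series in $x$ (the prefactor $y^{p/z}$ contributes only $y$-monodromy), so they are fixed by $T$, whereas $I_5,I_6$ carry $x^{1/2}$ and $\log x$ and are acted on by $T$ with eigenvalue $-1$ together with a unipotent part; hence the $\lambda=1$ eigenspace is genuinely trivial and equals the rank-$4$ piece generated by $\bar I^{Y'}$. I would therefore set $\bar\cH^E(X'):=\bar\cH(X')^{T=\mathrm{id}}=\DD\cdot\bar I^{Y'}$. Finally, triviality of the $x$-monodromy makes restriction to $E$ an honest specialization at $x=0$, and the identity $\lim_{x\to0}\bar I^{Y'}(x,y)=I^{Y'}(y)$ already recorded above shows that $\bar\cH^E(X')|_E$ is the cyclic $D$-module generated by $I^{Y'}$, namely $\cH(Y')$. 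This gives the asserted isomorphism $\bar\cH^E(X')|_E\simeq\cH(Y')$.

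I expect the main obstacle to be making ``restriction to $E$'' rigorous as a functor on $D$-modules rather than as a mere limit of series: one must check that the annihilator ideal of $\bar I^{Y'}$ specializes at $x=0$ to the Picard--Fuchs ideal of $I^{Y'}$ (equivalently, that $(\triangle'_1,\triangle'_2)$ together with the extra operators cutting out the fixed submodule restrict along $E$ to a presentation of $\cH(Y')$), and that this specialization commutes with the passage to the $T$-fixed submodule. A secondary point requiring care is verifying that $\bar I^{Y'}$ really generates the \emph{entire} $T$-fixed submodule of the continued generator, so that $\bar\cH^E(X')$ has rank exactly $4$; this is where one uses that $\bar I^{Y'}$ is the trivial-monodromy part of the analytic continuation of $\mathbf{e}(\cV)I^{X'}$.
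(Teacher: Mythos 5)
Your proposal is correct and follows essentially the same route as the paper's own proof: identifying $\cH(X')$ and $\cH(Y')$ with the cyclic $D$-modules generated by the $I$-functions, performing the analytic continuation via $q_1\mapsto x^{-1}$, $q_2\mapsto xy$, invoking Lemmas~3.1--3.3 to split the six-dimensional solution space into the four components of $\bar I^{Y'}$ (trivial monodromy around $x=0$) and $I_5,I_6$ (nontrivial monodromy from $x^{1/2}$ and $\log x$), and recovering $\cH(Y')$ by the limit $\lim_{x\to 0}\bar I^{Y'}(x,y)=I^{Y'}(y)$. If anything, your formulation of the maximal trivial-monodromy submodule as the $T$-fixed part, and your flagged concerns about making the restriction functorial, are more explicit than the paper's own (rather terse) treatment of these points.
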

\begin{proof}
	We begin by identifying the ambient part quantum $D$-module $\cH(X')$ and $\cH(Y')$ with the cyclic $D$-modules generated by $I^{X'}(q_1,q_2)$ and $I^{Y'}(y)$, respectively. The change of variable $x\mapsto q_1^{-1}$ and $y\mapsto q_1q_2$ give rise to the anayltic continuation $\cH(X')	\rightsquigarrow \bar\cH(X')$.
	
	By Lemma 3.1 and Lemma 3.2, we may consider the submodule of $\bar\cH(X')$ corresponding to the sub $D$-module generated by the components of $\bar I^{Y'}(x,y)$. It has trivial monodromy around $x=0$ as the initial term of $\bar I^{Y'}(x,y)$ does not involve $x$. This trivial monodromy is also maximal because by Lemma 3.3, the remaining two solutions $I_5, I_6$ have non-trivial monodromy around $x=0$. Let $E$ denote the transition divisor $x=0$, and $\bar\cH^E(X')$ denote this submodule.
	
	Since $I^{Y'}(y)$ is recovered by $I^{Y'}(y)=\lim_{x\to 0} \bar I^{Y'}(x,y)$, we see that $\cH(Y')$ is isomophic to the restriction of $\bar\cH^E(X')$ to $E$. Hence our theorem is proved.
\end{proof}

\section{Global example of type (2,4)}
In this section, we study our first global example of Type II transition in degree 4. Let $Y_1$ be a (2,4) complete intersection in $\PP^5$ defined by the following equations:
\[f(x_1,x_2,x_3,x_4,x_5)+tx_0^2=0,\]
\[x_0^2(g(x_1,x_2,x_3,x_4,x_5)+tx_0^2)+x_0g_1(x_1,x_2,x_3,x_4,x_5)+g_2(x_1,x_2,x_3,x_4,x_5)=0,\]
where $f$ and $g$ are quadratic homogenous polynomial which form a complete intersection, $g_1$ is a generic cubic homogenous polynomial and $g_2$ is a generic quartic homogenous polynomial.

By deforming the above equations to $t=0$, we obtain a singular variety $\bar Y_1\subseteq \PP^5$ defined by 
\[f(x_1,x_2,x_3,x_4,x_5)=0,\]
\[x_0^2g(x_1,x_2,x_3,x_4,x_5)+x_0g_1(x_1,x_2,x_3,x_4,x_5)+g_2(x_1,x_2,x_3,x_4,x_5)=0.\]
Choosing $g_1$ and $g_2$ appropriately, we may assume $\overline{Y_1}$ has a unique singularity at $[1:0:0:0:0:0]$ arised from the (2,2) complete intersection $(f,g)$.

Now we take $X_1$ to be the blow up of $\bar{Y_1}$ at the point $[1:0:0:0:0:0]$. The expectional divisor is given by $\{f=g=0\}$ in $\PP^4$, which is a Del-Pezzo surface in degree 4. Clearly, both $X_1$ and $Y_1$ are Calabi-Yau 3-folds. The transition from $X_1$ to $Y_1$ is our primary example in this section. In our case here, $Y_1$ is the zero locus of a section of $\cO_{\PP^5}(2)\oplus\cO_{\PP^5}(4)$, whereas $X_1$ is natrually embeded into $\PP(\cO_{\PP^4}(-1)\oplus \cO_{\PP^4})$ as a complete intersection.

We adopt the following notations throughout this section.
\begin{itemize}
	\item $h:=c_1(\pi^*\cO_{\PP^4}(1)$), corresponding to small parameter $q_1$
	\item Let $\cO_\PP(1)$ be the anti-tautological bundle over $\PP(\cO_{\PP^4}(-1)\oplus \cO_{\PP^4})$, and $\xi:=c_1(\cO_\PP(1))$, corresponding to small parameter $q_2$.
	\item $p:=c_1(\cO_{\PP^5}(1))$, corresponding to small parameter $y$.
\end{itemize}
Then $X_1$ is the zero locus of a section of the vector bundle $\cO(2h)\oplus\cO(2h+2\xi)$ over $\PP(\cO_{\PP^4}(-1)\oplus \cO_{\PP^4})$, whose $I$-function is the following:
\[I^{X_1}(q_1,q_2): =(2h)(2h+2\xi)q_1^{h/z}q_2^{\xi/z}\sum_{(d_1,d_2)\in\NN^2} q_1^{d_1}q_2^{d_2}\frac{\prod^0\limits_{m=-\infty} (\xi-h+mz)\prod\limits_{m=1}^{2d_1}(2h+mz)\prod\limits_{m=1}^{2d_1+2d_2}(2h+2\xi+mz)}{\prod^{d_1}\limits_{m=1}(h+mz)^5\prod^{d_2}\limits_{m=1}(\xi+mz)\prod^{d_2-d_1}\limits_{m=-\infty}(\xi-h+mz)},\]
subject to the relation $h^5=\xi(\xi-h)=0$.

On the other hand, the $I$-function for $Y_1$ is the following:
\[I^{Y_1}(y):=(2p)(4p)y^{p/z}\sum_{j\in \NN} \frac{\prod\limits_{m=1}^{4j}(4p+mz)\prod\limits_{m=1}^{2j}(2p+mz)}{\prod\limits_{m=1}^{j}(p+mz)^6},\]
subject to the relation $p^6=0$.

Similar to the local model, we introduce the following auxcillary hypergeometric series $\bar I^{Y_1}$ in two variables $x$ and $y$.
\[\bar I^{Y_1}(x,y):=(2p)(4p)y^{p/z}\sum_{(i,j)\in \NN^2}x^iy^j\frac{\prod\limits_{m=-\infty}^{0}(p+mz)^5\prod\limits_{m=-\infty}^{2j-2i}(2p+mz)\prod\limits_{m=-\infty}^{4j-2i}(2p+mz)}{\prod\limits_{m=-\infty}^{j-i}(p+mz)^5\prod\limits_{m=1}^{j}(p+mz)\prod\limits_{m=1}^{i}(mz)\prod\limits_{m=-\infty}^{0}(2p+mz)^2}.\]
It's easy to check that $\bar I^{Y_1}$ is a holomorphic function on a small domain minus the origin, and it has trivial monodromy around $x=0$. Taking the limit $x\to 0$, we obtain
\[\lim_{x\to 0}\bar I^{Y_1}(x,y)=I^{Y_1}(y),\]
which precisely recovers the $I$-function for $Y_1$.

To study the relation between the ambient part quantum $D$-modules of $X_1$ and $Y_1$, we want to find a way to relate $I^{X_1}(x,y)$ and $I^{Y_1}(y)$. As $I^{Y_1}$ is recovered from $\bar I^{Y_1}$, it is tempting to study the relation between $I^{X_1}$ and $I^{Y_1}$ as they both involve 2 parameters. 

We consider the Picard-Fuchs equations that annihilate $I^{X'}$, which usually originates from a GKZ system attached to the toric data. We have the following lemma:

\begin{lemma}
		The components of $I^{X_1}(q_1,q_2)$ comprise a basis of solutions to the differential equation system $\{\triangle_1I=\triangle_2I=\cL I=0\}$ at any point around the origin in $(\CC^*)^2$, where
		\[\triangle_1:=(\delta_{q_1})^5-4(\delta_{q_1})(\delta_{q_1}+\delta_{q_2})(2\delta_{q_1}-1)(\delta_{q_2}-\delta_{q_1}+1)(2\delta_{q_1}+2\delta_{q_2}-1)q_1,\]
		\[\triangle_2:=\delta_{q_2}(\delta_{q_2}-\delta_{q_1})-2(\delta_{q_1}+\delta_{q_2})(2\delta_{q_1}+2\delta_{q_2}-1)q_2,\]
		\[\cL:=(2\delta_{q_1}^3-2\delta_{q_1}^2\delta_{q_2}+\delta_{q_1}\delta_{q_2}^2)-8(2\delta_{q_1}-1)(\delta_{q_2}-\delta_{q_1}+1)(2\delta_{q_2}+2\delta_{q_1}-1)q_1-2\delta_{q_1}\delta_{q_2}(2\delta_{q_1}+2\delta_{q_2}-1)q_2.\]
		Moreover, these differential operators are related in the following factorization
		\[2\triangle_1+\delta_{q_1}^2\delta_{q_2}\triangle_2=(\delta_{q_1}+\delta_{q_2})\delta_{q_1}\cL.\tag{4.1}\]
\end{lemma}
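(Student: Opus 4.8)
The plan is to follow the template of Lemma 3.1, adapted to the larger three-operator system, and to treat the factorization (4.1) as a separate, purely algebraic verification. First I would write
\[I^{X_1}(q_1,q_2)=(2h)(2h+2\xi)\,q_1^{h/z}q_2^{\xi/z}\sum_{(d_1,d_2)\in\NN^2}q_1^{d_1}q_2^{d_2}A_{d_1,d_2},\]
and read off from the explicit product the two ratios $A_{d_1,d_2}/A_{d_1-1,d_2}$ and $A_{d_1,d_2}/A_{d_1,d_2-1}$, which are rational in $h/z+d_1$ and $\xi/z+d_2$. In the normalization of the stated operators (where $z=1$), $\delta_{q_1}$ multiplies the $(d_1,d_2)$-coefficient by $h+d_1$ and $\delta_{q_2}$ by $\xi+d_2$, while multiplication by $q_1$ (resp.\ $q_2$) lowers the corresponding index by one. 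Matching the coefficient of each $q_1^{d_1}q_2^{d_2}$ then shows that $\triangle_1$ and $\triangle_2$ encode exactly these two recursions, and the cohomology relation $h^5=\xi(\xi-h)=0$ is what forces $A_{d_1,d_2}=0$ outside $\NN^2$, fixing the initial data. This gives $\triangle_1 I^{X_1}=\triangle_2 I^{X_1}=0$.

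For $\cL$ I would \emph{not} try to deduce $\cL I^{X_1}=0$ from (4.1): that identity only yields $(\delta_{q_1}+\delta_{q_2})\delta_{q_1}\cL\,I^{X_1}=0$, which a priori allows $\cL I^{X_1}$ to be a nonzero element of the kernel of $(\delta_{q_1}+\delta_{q_2})\delta_{q_1}$. Instead I would verify $\cL I^{X_1}=0$ directly by substituting the two recursions above into the coefficient of $q_1^{d_1}q_2^{d_2}$ in $\cL I^{X_1}$. Writing $a=h+d_1$ and $b=\xi+d_2$, the shared linear factors produced by the recursions cancel, and the three groups of terms in $\cL$ collapse to $a(2a^2-2ab+b^2)-\frac{2a^4+ab^2(b-a)}{a+b}$, which one checks is identically zero as a rational function in $a,b$. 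Hence the six components are simultaneous solutions of all three operators.

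Next, as in Lemma 3.1, linear independence follows from distinct leading cohomology classes: the Euler factor $(2h)(2h+2\xi)=4(h^2+h\xi)$ times the basis $\{h^a\xi^b\}$ of $\CC[h,\xi]/(h^5,\xi(\xi-h))$ spans a $6$-dimensional subspace, producing six independent initial terms. The remaining point — and the main obstacle — is that the solution space is \emph{at most} $6$-dimensional. I would establish this by a holonomic-rank computation for the left ideal $(\triangle_1,\triangle_2,\cL)\subseteq\DD_q$. Here the factorization (4.1) is exactly the tool that makes the bound tractable: it shows $2\triangle_1=(\delta_{q_1}+\delta_{q_2})\delta_{q_1}\cL-\delta_{q_1}^2\delta_{q_2}\triangle_2$, so $\triangle_1$ is redundant and the system reduces to $\{\triangle_2,\cL\}$, of orders $2$ and $3$. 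Analyzing the indicial exponents of this reduced regular-singular system at the origin (the same kind of exponent analysis used to produce $I_5,I_6$ in Lemma 3.3) should confirm exactly six exponents, matching the six components. The conceptual content is that the two box operators $\triangle_1,\triangle_2$ alone cut out too large a solution space, and it is precisely $\cL$ — equivalently, the reduction afforded by (4.1) — that trims the holonomic rank down to $6$.

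Finally, the identity (4.1) itself is a direct computation in $\DD_q$, which I would organize by grading in $q$-degree and comparing the three pieces separately: the $q$-free part, the coefficient of $q_1$, and the coefficient of $q_2$. In each of $\triangle_1,\triangle_2,\cL$ every $q$ already sits to the right of all $\delta_{q_i}$, so essentially no reordering is needed and each comparison reduces to a commutative polynomial identity in $\delta_{q_1},\delta_{q_2}$. For instance the $q$-free parts are $2\delta_{q_1}^5+\delta_{q_1}^2\delta_{q_2}^2(\delta_{q_2}-\delta_{q_1})$ on the left and $\delta_{q_1}^2(2\delta_{q_1}^3-\delta_{q_1}\delta_{q_2}^2+\delta_{q_2}^3)$ on the right, which agree, and the $q_1$- and $q_2$-coefficients match term by term. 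This last step is routine bookkeeping and presents no real difficulty; the genuine work is the holonomic-rank bound in the previous paragraph.
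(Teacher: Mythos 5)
Your proposal is correct and follows the same skeleton as the paper's own proof: the coefficient recursions (the GKZ system) give $\triangle_1 I^{X_1}=\triangle_2 I^{X_1}=0$, the six components are independent by their initial terms, and the solution space of a system containing operators of orders $2$ and $3$ is at most $6$-dimensional, forcing a basis. The one place where you genuinely sharpen the paper is your treatment of $\cL I^{X_1}=0$: you are right that (4.1) alone only yields $(\delta_{q_1}+\delta_{q_2})\delta_{q_1}\bigl(\cL I^{X_1}\bigr)=0$, because $\cL$ sits rightmost in the composition, so the vanishing of $\cL I^{X_1}$ is not a formal consequence and must be verified; the paper's phrasing (``by the factorization we obtain $\cL$ \dots it is direct to check'') glosses over exactly this. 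Your verification is also correct as computed: substituting the two recursions into the $q_1^{d_1}q_2^{d_2}$-coefficient of $\cL I^{X_1}$, with $a=h+d_1$, $b=\xi+d_2$, gives $a(2a^2-2ab+b^2)-\frac{2a^4+ab^2(b-a)}{a+b}$, and indeed $(2a^3-2a^2b+ab^2)(a+b)=2a^4-a^2b^2+ab^3=2a^4+ab^2(b-a)$. One caveat you should make explicit: this substitution divides by factors such as $a+b$ and $4a(a+b)(2a-1)(b-a+1)(2a+2b-1)$, which are invertible only when the corresponding index is positive; at the boundary ($d_1=0$ or $d_2=0$, where $a=h$ is nilpotent) the recursion terms are absent and the vanishing instead uses the relations $h^5=\xi(\xi-h)=0$ together with the Euler prefactor $4h(h+\xi)$ --- for instance the $(0,0)$-coefficient is $4h(h+\xi)(2h^3-2h^2\xi+h\xi^2)=4(h^4\xi-h^4\xi)=0$. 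Finally, your rank bound is the weakest step: counting indicial exponents does not by itself bound the solution dimension of a two-variable system, and the honest justification is a holonomic-rank (Bezout-type) bound for $\{\triangle_2,\cL\}$ of orders $2$ and $3$; but since the paper asserts the same bound with no more justification, your argument is at or above the paper's level of rigor throughout.
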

\begin{proof}
	Indeed, the GKZ system attached to the toric data yields the generators $\triangle_1$ and $\triangle_2$. By the factorization (4.1), we obtain a differential operator $\cL$ of order 3, thus the system $\{\triangle_1I=\triangle_2I=\cL I=0\}$ can have at most 6-dimensional solution space. It is direct to check that the components of $I^{X_1}$ give 6 linearly independent solution to this differential equation system, hence they must form a basis.
\end{proof}

Similar to Section 3, we apply the following change of variables \[q_1\mapsto x^{-1}, \quad q_2\mapsto xy,\]
which yields the following relations between differential operators
\[\delta_{q_1}=\delta_{y}-\delta_{x},\quad \delta_{q_2}=\delta_{y}.\]
Let $\triangle_1'$, $\triangle_2'$, $\cL'$ denote the differential operators obtained by applying the above change of variables to $\triangle_1,\triangle_2,\cL$, respectively. Then we have
\begin{lemma}
	The components of $\bar I^{Y_1}(x,y)$ comprise 4 linearly independent solutions to the differential equation system $\{\triangle'_1I=\triangle'_2I=\cL' I=0\}$ at any point around the origin in $(\CC^*)^2$, where
	\[\triangle'_1:=(\delta_{y}-\delta_{x})^5-4(\delta_{y}-\delta_{x})(2\delta_{y}-\delta_{x})(2\delta_{y}-2\delta_{x}-1)(\delta_{x}+1)(4\delta_{y}-2\delta_{x}-1)x^{-1},\]
	\[\triangle'_2:=\delta_{y}\delta_{x}-2(2\delta_{y}-\delta_{x})(4\delta_{y}-2\delta_{x}-1)xy,\]
	\begin{align*}
	\cL'&:=(2(\delta_{y}-\delta_x)^3-2(\delta_{y}-\delta_x)^2\delta_{y}+(\delta_{y}-\delta_x)\delta_{y}^2)-8(2\delta_{y}-2\delta_x-1)(\delta_{x}+1)(4\delta_{y}-2\delta_{x}-1)q_1\\&\qquad-2(\delta_{y}-\delta_x)\delta_{y}(4\delta_{y}-2\delta_{x}-1)xy.
	\end{align*}
\end{lemma}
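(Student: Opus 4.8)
The plan is to follow the template of Lemma 3.2, the new ingredient being the third operator $\cL'$, which I would control through the factorization (4.1). First I would write
\[\bar I^{Y_1}(x,y)=(2p)(4p)\,y^{p/z}\sum_{(i,j)\in\NN^2}x^iy^jA_{i,j},\qquad A_{0,0}=1,\]
where $A_{i,j}$ is the explicit ratio of products read off from the definition of $\bar I^{Y_1}$. The essential bookkeeping device is that, working modulo $p^6=0$, the operators act on each monomial $y^{p/z}x^iy^j$ diagonally via $\delta_x\mapsto i$ and $\delta_y\mapsto j+p/z$, while the factors $x^{-1}$ and $xy$ shift the index via $(i,j)\mapsto(i+1,j)$ and $(i,j)\mapsto(i-1,j-1)$ respectively, in the sense of extracting the coefficient of $x^iy^j$. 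Collecting the coefficient of $x^iy^j$ in $\triangle'_1\bar I^{Y_1}$ then turns $\triangle'_1\bar I^{Y_1}=0$ into a pure $i$-shift recursion relating $A_{i,j}$ and $A_{i+1,j}$, and likewise $\triangle'_2\bar I^{Y_1}=0$ becomes a diagonal recursion relating $A_{i,j}$ and $A_{i-1,j-1}$. I would verify both against the hypergeometric ratios coming from the explicit formula; this is exactly the ``straightforward check'' of Lemma 3.2, and no inversion of the recursion is needed since the $A_{i,j}$ are given in closed form.

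For the third operator I would apply the change of variables $q_1\mapsto x^{-1}$, $q_2\mapsto xy$ to the factorization (4.1), which, since $\delta_x$ and $\delta_y$ commute, transforms cleanly into
\[2\triangle'_1+(\delta_y-\delta_x)^2\delta_y\,\triangle'_2=(2\delta_y-\delta_x)(\delta_y-\delta_x)\,\cL'.\]
Having already shown $\triangle'_1\bar I^{Y_1}=\triangle'_2\bar I^{Y_1}=0$, the left-hand side annihilates $\bar I^{Y_1}$, so $(2\delta_y-\delta_x)(\delta_y-\delta_x)\,\cL'\bar I^{Y_1}=0$.

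The hard part is passing from this to $\cL'\bar I^{Y_1}=0$ itself. The operator $(2\delta_y-\delta_x)(\delta_y-\delta_x)$ acts on $p^2y^{p/z}x^iy^j$ with eigenvalue $(2j+2p/z-i)(j+p/z-i)$, which is a unit in $\CC[[z^{-1}]][p]/(p^6)$ except on the two loci $i=j$ and $i=2j$, where a factor degenerates to a nilpotent multiple of $p$. Hence the displayed vanishing forces $\cL'\bar I^{Y_1}=0$ away from these loci but leaves a possible residue of top order in $p$ along $i=j$ and $i=2j$; disposing of this residue is the genuine obstacle not present in the local model. I would close the gap by evaluating the coefficients of $\cL'\bar I^{Y_1}$ on the lines $i=j$ and $i=2j$ directly from the Step-1 recursions and checking that they vanish there as well (equivalently, verifying by hand the single extra recursion encoded by $\cL'$ restricted to those loci).

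Finally I would count solutions. Because of the prefactor $(2p)(4p)=8p^2$ together with $p^6=0$, the expansion $y^{p/z}=\sum_{k\geqslant0}(p/z)^k(\log y)^k/k!$ shows that $\bar I^{Y_1}$ has exactly four nonzero components, living in degrees $p^2,p^3,p^4,p^5$, whose leading terms are proportional to $1,\log y,(\log y)^2,(\log y)^3$. These are linearly independent functions, so the four components are four independent solutions of $\{\triangle'_1I=\triangle'_2I=\cL'I=0\}$. Since this system is the image under the (biholomorphic) change of variables of the rank-six system in Lemma 4.1, its solution space is again six-dimensional, and the four components of $\bar I^{Y_1}$ are precisely the solutions with trivial monodromy around $x=0$, the remaining two being the analogues of $I_5,I_6$.
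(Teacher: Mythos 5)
Your proposal is correct, but it organizes the key step differently from the paper. The paper's own proof of this lemma is a one-line assertion that everything ``can be checked directly'': that is, all three operators $\triangle'_1,\triangle'_2,\cL'$ are verified against the closed-form coefficients of $\bar I^{Y_1}$ by exactly the coefficient-extraction recursions you set up in your first step (an $i$-shift recursion for $\triangle'_1$, a diagonal recursion for $\triangle'_2$, and a three-term recursion mixing $A_{i+1,j}$, $A_{i,j}$, $A_{i-1,j-1}$ for $\cL'$), with linear independence read off from initial terms as in your last step. What you do differently is to bypass the direct three-term check for $\cL'$ by pushing the factorization (4.1) through the change of variables to obtain $2\triangle'_1+(\delta_y-\delta_x)^2\delta_y\,\triangle'_2=(2\delta_y-\delta_x)(\delta_y-\delta_x)\,\cL'$; this is legitimate, since the monomial coordinate change induces an isomorphism of the rings of differential operators (and it also confirms, as your identity implicitly requires, that the ``$q_1$'' appearing in the paper's displayed formula for $\cL'$ is a typo for $x^{-1}$). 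Your zero-divisor analysis is precisely the price of this shortcut: $(2\delta_y-\delta_x)(\delta_y-\delta_x)$ acts on $y^{p/z}x^iy^j$ by $(2j-i+2p/z)(j-i+p/z)$, which is a unit in $\CC(z)[p]/(p^6)$ unless $i=j$ or $i=2j$, where one factor becomes nilpotent and the conclusion degenerates to ``the coefficient of $\cL'\bar I^{Y_1}$ is killed by $p$'' (so it could still be a multiple of $p^5$, or of $p^4$ at $(i,j)=(0,0)$); you correctly flag that these two lines of indices must then be checked by hand from the explicit coefficients. In sum, your route trades the uniform but computationally heavier verification of the $\cL'$-recursion for a structural argument plus a residual check on two one-parameter families of indices, while the paper's route is uniform and never meets the degeneration issue; both reduce to routine hypergeometric identities of the same kind that the paper itself leaves to the reader.
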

\begin{proof}
	This can be checked directly.
\end{proof}
To find the extra solutions to the above differetial equation system $\{\triangle'_1I=\triangle'_2I=0\}$, we introduce $\bar I^{Y_1}_{ext}(x,y)$ in the following way,
\[\bar I^{Y_1}_{ext}(x,y)=x^{\frac12+u}\sum_{(i,j)\in\NN^2}x^iy^jC_{i,j},\tag{4.2}\]
where $\{C_{i,j}\}$ satisfies the following recursion relations for $(i,j)\in \ZZ^2$:
\[C_{i,j}(j-i-\frac12-u)^4=C_{i+1,j}(2j-u-i-\frac12)(2j-2u-2i-2)(u+i+\frac32)(4j-2u-2i-2),\tag{4.3}\]
\[(4j-2u-2i-1)(4j-2i-2u-2)C_{i-1,j-1}=C_{i,j}(j)(i+\frac12+u).\tag{4.4}\] 
\begin{lemma}
	Let $I_5,I_6\in \CC[[x,y,\log x]][x^{\frac12}]$ be the components of $\bar I^{Y_1}_{ext}(x,y)$ in the following sense
	\begin{align*}
	\pi:\CC[[x,y,u,\log x]][x^{\frac12}] &\longrightarrow \CC[[x,y,u,\log x]][x^{\frac12}]/(u^2),\\
	x^{\frac12}e^{u\log x}\sum_{(i,j)\in\NN^2}x^iy^jC_{i,j} &\longmapsto I_5+I_6u,
	\end{align*}
	where $\pi$ is the obvious projection map, and $C_{i,j}$ are defined recursively by (4.3) and (4.4) with initial condition $C_{0,0}=1$.
	Then $I_5$ and $I_6$, together with the components of $\bar I^{Y_1}(x,y)$, comprise a basis of solutions to the differential equation system $\{\triangle'_1I=\triangle'_2I=\cL'I=0\}$ at any point around the origin in $(\CC^*)^2$.
\end{lemma}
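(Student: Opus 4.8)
The plan is to follow the proof of Lemma 3.3, treating $\bar I^{Y_1}_{ext}$ as the pair of Frobenius solutions attached to a repeated indicial exponent of the transformed system at $x=0$, with the one genuinely new ingredient being the third-order operator $\cL'$. First I would substitute the ansatz (4.2) into $\triangle'_1$ and $\triangle'_2$. Since $\delta_x$ and $\delta_y$ act on the monomial $x^{\frac12+u+i}y^j$ by the scalars $\frac12+u+i$ and $j$, while the factors $x^{-1}$ and $xy$ shift the index $(i,j)$ by $(-1,0)$ and $(+1,+1)$ respectively, collecting the coefficient of each monomial shows that $\triangle'_1\bar I^{Y_1}_{ext}=0$ is equivalent to the recursion (4.3) and $\triangle'_2\bar I^{Y_1}_{ext}=0$ to (4.4). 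This is the exact analogue of the opening step of Lemma 3.3, only with the longer factored operators of Lemma 4.2.

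Next I would check that (4.3) and (4.4) determine all $C_{i,j}$ uniquely from $C_{0,0}=1$ and that, modulo $u^2$, every coefficient with $i<0$ or $j<0$ vanishes. Two points need care. The recursion is well-defined because each factor one must invert (for instance $(j-i-\frac12-u)$ or $(u+i+\frac32)$) is a nonzero half-integer for $(i,j)\in\NN^2$ and formal $u$, so no division by zero occurs, while the factor $j$ on the right of (4.4) forces the negative-index coefficients to vanish once $u^2=0$. Moreover the lowest-order coefficient that $\triangle'_1$ produces, namely the value of its $q_1$-cofactor at the exponent $-\frac12+u$ with $j=0$, is $O(u^2)$: there the two factors coming from $(2\delta_y-2\delta_x-1)$ and $(4\delta_y-2\delta_x-1)$ each evaluate to a multiple of $u$. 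Hence $\frac12$ is a double root of the indicial equation in the $x$-direction, so that $\bar I^{Y_1}_{ext}$ solves $\triangle'_1I=\triangle'_2I=0$ modulo $u^2$, and projecting under $\pi$ yields the analytic solution $I_5$ together with the logarithmic solution $I_6$, the $\log x$ arising from $x^{\frac12+u}=x^{\frac12}(1+u\log x+\cdots)$. One should also confirm that the two recursions are compatible, i.e. that $C_{i,j}$ is independent of whether $(i,j)$ is reached by the horizontal step (4.3) or the diagonal step (4.4); this path-independence reflects the holonomicity of the system and can also be checked directly.

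The main obstacle, which has no counterpart in Section 3, is to show that $\cL'$ also annihilates $I_5$ and $I_6$. Here I would invoke the factorization (4.1), which under $\delta_{q_1}=\delta_y-\delta_x$, $\delta_{q_2}=\delta_y$ becomes
\[2\triangle'_1+(\delta_y-\delta_x)^2\delta_y\,\triangle'_2=(2\delta_y-\delta_x)(\delta_y-\delta_x)\,\cL'.\]
Thus any common solution $I$ of $\triangle'_1I=\triangle'_2I=0$ satisfies $(2\delta_y-\delta_x)(\delta_y-\delta_x)\cL'I=0$. The operator $(2\delta_y-\delta_x)(\delta_y-\delta_x)$ acts on each monomial $x^{\frac12+u+i}y^j$ appearing in $\cL'I$ by the scalar $(2j-\frac12-u-i)(j-\frac12-u-i)$, which never vanishes, again because of the half-integer shift. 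Hence this cofactor is invertible on the relevant space of series, forcing $\cL'I=0$, and in particular $\cL'I_5=\cL'I_6=0$. This invertibility of the cofactor is the heart of the lemma.

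Finally I would assemble the basis. The four components of $\bar I^{Y_1}(x,y)$ from Lemma 4.2 involve only integer powers of $x$, whereas $I_5\sim x^{\frac12}$ and $I_6\sim x^{\frac12}\log x$, so the six functions are linearly independent by inspection of their leading terms. By Lemma 4.1 the system $\{\triangle_1I=\triangle_2I=\cL I=0\}$ has a six-dimensional local solution space, and the substitution $q_1\mapsto x^{-1}$, $q_2\mapsto xy$ is a biholomorphism of $(\CC^*)^2$, hence preserves the dimension of the local solution space. Therefore $I_5$, $I_6$ together with the components of $\bar I^{Y_1}(x,y)$ exhaust the solution space of $\{\triangle'_1I=\triangle'_2I=\cL'I=0\}$ and form a basis, which is the assertion of the lemma.
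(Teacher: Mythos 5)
Your proposal is correct, and its skeleton --- the Frobenius ansatz (4.2), the equivalence of the recursions (4.3)--(4.4) with $\triangle'_1$ and $\triangle'_2$, vanishing of negative-index coefficients modulo $u^2$, linear independence from leading terms, and the six-dimensional bound --- is the same as the paper's. Where you genuinely diverge is the operator $\cL'$: the paper merely asserts that the recursions are ``compatible with'' all three operators, leaving the reader to check the third recursion generated by $\cL'$ against (4.3) and (4.4), whereas you deduce $\cL'I=0$ from the transformed factorization
\[2\triangle'_1+(\delta_y-\delta_x)^2\delta_y\,\triangle'_2=(2\delta_y-\delta_x)(\delta_y-\delta_x)\,\cL',\]
plus the observation that the cofactor $(2\delta_y-\delta_x)(\delta_y-\delta_x)$ acts on each monomial $x^{\frac12+u+i}y^j$ by the scalar $(2j-i-\tfrac12-u)(j-i-\tfrac12-u)$, a unit in $\CC[u]/(u^2)$ because the constant part is a nonzero half-odd integer, so the cofactor is injective on the relevant space of series. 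This is a cleaner mechanism than the paper's unproved compatibility claim: it explains structurally why no separate recursion for $\cL'$ is needed, at the price of leaning on (4.1) being an exact operator identity (which it is, since the change of variables is an isomorphism of rings of differential operators). Your verification that $\tfrac12$ is a double indicial root --- both factors $(2\delta_y-2\delta_x-1)$ and $(4\delta_y-2\delta_x-1)$ reduce to multiples of $u$ at exponent $-\tfrac12+u$, $j=0$ --- also supplies a detail the paper omits. One sentence to tighten: ``the biholomorphism preserves the dimension of the local solution space'' matches points near the $(x,y)$-origin with points of the original system near $q_1=\infty$, not near $q=0$, so to import Lemma 4.1 you should either note that its rank bound is algebraic and holds at every nonsingular point, or invoke constancy of holonomic rank off the singular locus; this is the same looseness as the paper's bare assertion that the primed system ``should have at most 6-dimensional solution space,'' so it is not a gap relative to the paper's own proof.
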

\begin{proof}
	Given the solution form (4.2), it is straightfoward to check that the recursion relation (4.3) and (4.4) are compatible with $\triangle'_1$, $\triangle'_2$ and $\cL'$. If we require $u^2=0$, we see that $C_{i,j}=0$ if $i<0$ or $j<0$. Given initial condition $C_{0,0}=1$, it is clear that $C_{i,j}$ are uniquely determined for all $(i,j)\in \NN^2$. Thus $I_5$ and $I_6$ are solutions to the differential equation system $\{\triangle'_1I=\triangle'_2I=\cL'I=0\}$. We also note that $I_5$, $I_6$, together with the components of $\bar I^{Y_1}(x,y)$, are linearly-independent because of their initial terms. On the other hand, the differential equation system $\{\triangle'_1I=\triangle'_2I=\cL'I=0\}$ should have at most 6-dimensional solution space, hence the lemma follows.
\end{proof}
We are now in a position to prove the main theorem in this section.
\begin{thm}
	The Conjecture 1.1 holds for the $\{X_1,Y_1\}$, namely:
	one may perform analytic continuation of $\cH(X_1)$ over the extended K\"ahler moduli to obtain a $D$-module $\bar\cH(X_1)$, then there exists a divisor $E$ and a submodule $\bar\cH^E(X_1)\subseteq \bar H(X_1)$ with maximum trivial monodromy around $E$, such that
	\[\bar\cH^E(X_1)|_{E}\simeq \cH(Y_1),\]
	where $\bar\cH^E(X_1)$ is the restriction to $E$.
\end{thm}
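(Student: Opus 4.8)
The plan is to follow exactly the structure established in the proof of Theorem~3.4 for the local model, now fed by Lemmas~4.1, 4.2, and 4.3 in place of Lemmas~3.1--3.3. First I would identify the ambient part quantum $D$-modules $\cH(X_1)$ and $\cH(Y_1)$ with the cyclic $D$-modules generated by $I^{X_1}(q_1,q_2)$ and $I^{Y_1}(y)$, respectively, which is legitimate by the Mirror Theorem and the Coates--Givental Birkhoff factorization discussed in Section~2 (the geometry here is Calabi--Yau, so the reconstruction remark applies). By Lemma~4.1, the six components of $I^{X_1}$ form a basis of solutions to $\{\triangle_1 I=\triangle_2 I=\cL I=0\}$. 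Applying the change of variables $q_1\mapsto x^{-1}$, $q_2\mapsto xy$ (so that $\delta_{q_1}=\delta_y-\delta_x$, $\delta_{q_2}=\delta_y$) transports this $D$-module to $\bar\cH(X_1)$; this is the analytic continuation over the extended K\"ahler moduli, and the transformed operators are $\triangle'_1,\triangle'_2,\cL'$ of Lemma~4.2.

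Next I would isolate the candidate submodule $\bar\cH^E(X_1)\subseteq\bar\cH(X_1)$ generated by the four components of $\bar I^{Y_1}(x,y)$. By Lemma~4.2 these four components are solutions of the transported system, and by Lemma~4.3 the remaining two solutions $I_5,I_6$ of $\bar I^{Y_1}_{ext}$ carry the $x^{1/2+u}$ and $\log x$ behavior. I would then argue that $\bar\cH^E(X_1)$ has trivial monodromy around $x=0$: the leading term of $\bar I^{Y_1}(x,y)$ carries no fractional power of $x$ and no $\log x$, whereas by Lemma~4.3 the extra solutions $I_5,I_6$ have genuinely nontrivial monodromy (the $x^{1/2}$ factor and the $\log x$ coming from the $u$-expansion). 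This shows the trivial-monodromy submodule is precisely the four-dimensional piece and hence that the trivial monodromy is \emph{maximal}, which is the content of the ``maximum trivial $E$-monodromy'' requirement in Conjecture~1.1.

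Finally, letting $E$ denote the transition divisor $\{x=0\}$, I would invoke the limit identity $I^{Y_1}(y)=\lim_{x\to 0}\bar I^{Y_1}(x,y)$, already recorded just before Lemma~4.1, to conclude that restricting $\bar\cH^E(X_1)$ to $E$ recovers the cyclic $D$-module generated by $I^{Y_1}(y)$, i.e.\ $\bar\cH^E(X_1)|_E\simeq\cH(Y_1)$. Since $\bar I^{Y_1}$ has trivial $x$-monodromy, the restriction is well defined, and this completes the verification of Conjecture~1.1 for $\{X_1,Y_1\}$.

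I expect the only genuinely delicate point to be the claim that the trivial monodromy of $\bar\cH^E(X_1)$ is maximal rather than merely present. The cleanest argument is the dimension count supplied by Lemma~4.3: since the full solution space is six-dimensional, $\bar I^{Y_1}$ accounts for four solutions with trivial $x$-monodromy, and the two extra solutions $I_5,I_6$ exhaust the remaining dimensions while carrying nontrivial monodromy, no larger trivial-monodromy submodule can exist. The subtlety is ensuring that $I_5$ and $I_6$ are not accidentally monodromy-invariant after passing to the $D$-module quotient; this is controlled by their distinct initial terms (the $x^{1/2}$ factor for $I_5$ and the additional $\log x$ in $I_6$), which guarantees that the monodromy operator acts nontrivially on the $\langle I_5,I_6\rangle$ block and thus cannot be absorbed into $\bar\cH^E(X_1)$.
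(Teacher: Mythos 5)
Your proposal is correct and follows essentially the same route as the paper's own proof: identification of $\cH(X_1)$, $\cH(Y_1)$ with the cyclic $D$-modules generated by $I^{X_1}$ and $I^{Y_1}$, analytic continuation via $q_1\mapsto x^{-1}$, $q_2\mapsto xy$, the submodule generated by the components of $\bar I^{Y_1}$ with maximality of trivial monodromy deduced from Lemmas 4.1--4.3 (the six-dimensional solution space and the nontrivial $x^{1/2}$, $\log x$ behavior of $I_5$, $I_6$), and restriction to $E=\{x=0\}$ via the limit identity $I^{Y_1}(y)=\lim_{x\to 0}\bar I^{Y_1}(x,y)$. Your closing dimension-count discussion of maximality is a slightly more explicit rendering of the argument the paper states in one line, but it is the same argument.
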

\begin{proof}
	Following the argument in Section 3, we first identify the ambient part quantum $D$-module $\cH(X_1)$ and $\cH(Y_1)$ with the cyclic $D$-modules generated by $I^{X_1}(q_1,q_2)$ and $I^{Y_1}(y)$, respectively. The change of variable $x\mapsto q_1^{-1}$ and $y\mapsto q_1q_2$ give rise to the anayltic continuation $\cH(X_1)	\rightsquigarrow \bar\cH(X_1)$.
	
	By Lemma 4.1 and Lemma 4.2, we consider the submodule of $\bar\cH(X_1)$ corresponding to the sub $D$-module attached to the components of $\bar I^{Y_1}(x,y)$. It has trivial monodromy around $x=0$ as the initial term of $\bar I^{Y_1}(x,y)$ does not involve $x$. This trivial monodromy is also maximal because by Lemma 4.3, it is clear that the remaining two solutions $I_5, I_6$ have non-trivial monodromy around $x=0$. Let $E$ denote the transition divisor corresponding to $x=0$, and $\bar\cH^E(X_1)$ denote this submodule.
	
	As $I^{Y_1}(y)$ is recovered by $I^{Y_1}(y)=\lim_{x\to 0} \bar I^{Y_1}(x,y)$, we obtain immediately that $\cH(Y_1)$ is isomophic to the restriction of $\bar\cH^E(X_1)$ to $E$. Hence the theorem is proved.
\end{proof}

\section{Global example of type (3,3)}
In this section, our goal is to verify Conjecuture 1.1 for another global example of degree-4 transition. Let $Y_2$ be a $(3,3)$-complete intersection in $\PP^5$ defined by the following two equations:
\[x_0(f(x_1,x_2,x_3,x_4,x_5)+tx_0^2)+f_1(x_1,x_2,x_3,x_4,x_5)=0,\]
\[x_0(g(x_1,x_2,x_3,x_4,x_5)+tx_0^2)+g_1(x_1,x_2,x_3,x_4,x_5)=0,\]
where $f$ and $g$ are quadratic homogenous polynomials that form a complete interesection, while $f_1$ and $g_1$ are generic homogenous polynomial in degree 3.
Deforming the above equations by letting $t=0$, we obtain a singular 3-fold $\bar Y_2\subseteq \PP^5$, whose defining equations are:
\[x_0f(x_1,x_2,x_3,x_4,x_5)+f_1(x_1,x_2,x_3,x_4,x_5)=0,\]
\[x_0g(x_1,x_2,x_3,x_4,x_5)+g_1(x_1,x_2,x_3,x_4,x_5)=0.\]
As $f_1$ and $g_1$ are choosen to be generic, we may assume that $\bar Y_2$ has a unique singularity at $[1:0:0:0:0:0]$, which is a (2,2) complete intersection singularity. If we blow up $Y_0$ at this point, we obtain a smooth Calabi-Yau 3-fold $X_2$, where the exceptional divisor is a (2,2)-complete intersection in $\PP^4$, namely, a Del-Pezzo surface in degree 4. 

Going backwards, we see that $Y_2$ is obtained by birationally contracting a degree-4 Del-Pezzo surface in $X_2$ and followed by smoothing out the singularity. Thus $\{X_2,Y_2\}$ is another global example of degree-4 transitions, where both sides are Calabi-Yau 3-folds. In this case, $Y_2$ is the zero locus of a section of $\cO_{\PP^5}(3)\oplus\cO_{\PP^5}(3)$, whereas $X_2$ natrually sits inside $\PP(\cO_{\PP^4}(-1)\oplus \cO_{\PP^4})$ as a complete intersection.

As before, we shall adopt the following notations throughout this section.
\begin{itemize}
	\item $h:=c_1(\pi^*\cO_{\PP^4}(1)$), corresponding to small parameter $q_1$
	\item Let $\cO_\PP(1)$ be the anti-tautological bundle over $\PP(\cO_{\PP^4}(-1)\oplus \cO_{\PP^4})$, and $\xi:=c_1(\cO_\PP(1))$, corresponding to small parameter $q_2$.
	\item $p:=c_1(\cO_{\PP^5}(1))$, corresponding to small parameter $y$.
\end{itemize}
Then $X_2$ is the zero locus of a section of the vector bundle $\cO(2h+\xi)\oplus\cO(2h+\xi)$ over $\PP(\cO_{\PP^4}(-1)\oplus \cO_{\PP^4})$, whose $I$-function is the following:
\[I^{X_2}(q_1,q_2): =(2h+\xi)^2q_1^{h/z}q_2^{\xi/z}\sum_{(d_1,d_2)\in\NN^2} q_1^{d_1}q_2^{d_2}\frac{\prod^0\limits_{m=-\infty} (\xi-h+mz)\prod\limits_{m=1}^{2d_1}(2h+mz)\prod\limits_{m=1}^{2d_1+d_2}(2h+\xi+mz)^2}{\prod^{d_1}\limits_{m=1}(h+mz)^5\prod^{d_2}\limits_{m=1}(\xi+mz)\prod^{d_2-d_1}\limits_{m=-\infty}(\xi-h+mz)},\]
subject to the relation $h^5=\xi(\xi-h)=0$.

On the other hand, the $I$-function for $Y_2$ is the following:
\[I^{Y_2}(y):=(3p)^2y^{p/z}\sum_{j\in \NN} \frac{\prod\limits_{m=1}^{3j}(3p+mz)^2}{\prod\limits_{m=1}^{j}(p+mz)^6},\]
subject to the relation $p^6=0$.

To compare $I^{X_2}$ and $I^{Y_2}$, we introduce the following auxcillary hypergeometric series $\bar I^{Y_2}$ in two variables $x$ and $y$.
\[\bar I^{Y_2}(x,y):=(3p)^2y^{p/z}\sum_{(i,j)\in \NN^2}x^iy^j\frac{\prod\limits_{m=-\infty}^{0}(p+mz)^5\prod\limits_{m=-\infty}^{3j-2i}(3p+mz)^2}{\prod\limits_{m=-\infty}^{j-i}(p+mz)^5\prod\limits_{m=1}^{j}(p+mz)\prod\limits_{m=1}^{i}(mz)\prod\limits_{m=-\infty}^{0}(2p+mz)^2}.\]
It is clear that $\bar I^{Y_2}$ is a holomorphic function on a small domain minus the origin, and it has trivial monodromy around $x=0$. Taking the limit $x\to 0$, we obtain
\[\lim_{x\to 0}\bar I^{Y_2}(x,y)=I^{Y_2}(y),\]
which still recovers the $I$-function for $Y_2$.

To compare $I^{X_2}$ and $I^{Y_2}$, we turn our attention to the relation betwen $I^{X_2}$ and $\bar I^{Y_2}$, as $\bar I^{Y_2}$ involves two variables $x$ and $y$, and recovers $I^{Y_2}$ naturally. We consider the Picard-Fuchs equations that annihilates $I^{X_2}$, which usually arises from a GKZ system attached to the toric data.
\begin{lemma}
	The components of $I^{X_2}(q_1,q_2)$ comprise a basis of solutions to the differential equation system $\{\triangle_1I=\triangle_2I=\cL I=0\}$ at any point around the origin in $(\CC^*)^2$, where
	\[\triangle_1:=(\delta_{q_1})^5-q_1(\delta_{q_2}-\delta_{q_1})(2\delta_{q_1}+\delta_{q_2}+1)^2(2\delta_{q_1}+\delta_{q_2}+2)^2,\]
	\[\triangle_2:=\delta_{q_2}(\delta_{q_2}-\delta_{q_1})-q_2(2\delta_{q_2}+\delta_{q_1}+1)^2,\]
	\[\cL:=9\delta_{q_1}^3-5\delta_{q_2}^3-36(\delta_{q_2}-\delta_{q_1}+1)(2\delta_{q_1}+\delta_{q_2}-1)^2q_1+(36\delta_{q_1}^3+45\delta_{q_1}^2\delta_{q_2}+25\delta_{q_1}\delta_{q_2}^2+5\delta_{q_2}^3)q_2.\]
	Moreover, these differential operators are related in the following factorization
	\[36\triangle_1-(36\delta_{q_1}^3+45\delta_{q_1}^2\delta_{q_2}+25\delta_{q_1}\delta_{q_2}^2+5\delta_{q_2}^3)\triangle_2=(2\delta_{q_1}+\delta_{q_2})^2\cL.\tag{5.1}\]
\end{lemma}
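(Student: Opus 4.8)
The plan is to mirror the proof of Lemma 4.1. Three things must be established: that $\triangle_1,\triangle_2$ are the hypergeometric operators annihilating $I^{X_2}$; that the factorization (5.1) produces a genuine order-three operator $\cL$ which also annihilates $I^{X_2}$; and that the resulting system has solution space of dimension at most $6$, which is then saturated by the six components of $I^{X_2}$.

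First I would write $I^{X_2}=(2h+\xi)^2q_1^{h/z}q_2^{\xi/z}\sum_{(d_1,d_2)}q_1^{d_1}q_2^{d_2}A_{d_1,d_2}$ and form the ratios $A_{d_1+1,d_2}/A_{d_1,d_2}$ and $A_{d_1,d_2+1}/A_{d_1,d_2}$. Setting $z=1$ and using that $\delta_{q_1}$ acts on the $(d_1,d_2)$-summand by $h+d_1$ and $\delta_{q_2}$ by $\xi+d_2$ (so that $2\delta_{q_1}+\delta_{q_2}\mapsto 2h+\xi+2d_1+d_2$ and $\delta_{q_2}-\delta_{q_1}\mapsto \xi-h+d_2-d_1$), the first recursion is annihilated exactly by $\triangle_1$ and the second by $\triangle_2$; the cohomology relations $h^5=\xi(\xi-h)=0$ are precisely what force $A_{d_1,d_2}=0$ off $\NN^2$, matching the support of the series. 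This is the same bookkeeping as in Sections 3 and 4 and is routine.

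Next I would verify the Weyl-algebra identity (5.1) by expanding both sides, keeping track of the commutation rule $[\delta_{q_i},q_i]=q_i$. As a consistency check one can first match the $q$-free parts, where the identity reduces to the commutative polynomial identity $36\rho_1^5-(36\rho_1^3+45\rho_1^2\rho_2+25\rho_1\rho_2^2+5\rho_2^3)\rho_2(\rho_2-\rho_1)=(2\rho_1+\rho_2)^2(9\rho_1^3-5\rho_2^3)$, which indeed holds. The factorization exhibits $(2\delta_{q_1}+\delta_{q_2})^2\cL$ inside the left ideal $\langle\triangle_1,\triangle_2\rangle$, so $(2\delta_{q_1}+\delta_{q_2})^2(\cL I^{X_2})=0$; comparing lowest-order $q$-terms of each component of $\cL I^{X_2}$, which would otherwise be a nonzero series killed by $(2\delta_{q_1}+\delta_{q_2})^2$, forces $\cL I^{X_2}=0$. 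Equivalently, $\cL I^{X_2}=0$ can be checked head-on against the two recursions. Thus all six components of $I^{X_2}$ solve $\{\triangle_1I=\triangle_2I=\cL I=0\}$, and they are linearly independent because of their distinct leading terms.

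For the upper bound I would pass to the indicial ideal generated by the $q$-free initial forms $\rho_1^5$, $\rho_2(\rho_2-\rho_1)$ and $9\rho_1^3-5\rho_2^3$ of $\triangle_1,\triangle_2,\cL$. Using $\rho_2^2=\rho_1\rho_2$ one rewrites $9\rho_1^3-5\rho_2^3$ as $\rho_1^2(9\rho_1-5\rho_2)$, and a short reduction shows that $\CC[\rho_1,\rho_2]/(\rho_1^5,\rho_2^2-\rho_1\rho_2,\rho_1^2(9\rho_1-5\rho_2))$ has dimension $6$, with basis $\{1,\rho_1,\rho_1^2,\rho_1^3,\rho_2,\rho_1\rho_2\}$ (note in particular that $\rho_1^4$ already lies in this ideal). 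Hence the solution space is at most $6$-dimensional, and combined with the previous step the six components of $I^{X_2}$ form a basis. The main obstacle is the factorization (5.1): it is the only genuinely non-commutative computation, and it is exactly the production of $\cL$ with these coefficients that makes the holonomic rank collapse from the naive bound down to $6$. Everything else is recursion bookkeeping together with the finite-dimensional linear-algebra count above.
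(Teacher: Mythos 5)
Your architecture is the same as the paper's: read off $\triangle_1,\triangle_2$ from the coefficient recursions of $I^{X_2}$, produce $\cL$ via the factorization (5.1), bound the solution space by $6$, and finish with the six independent components. The one genuinely different ingredient is the rank bound: the paper gets ``at most $6$'' purely from the fact that (5.1) yields an operator $\cL$ of order $3$ (so that $\triangle_2,\cL$ of orders $2,3$ give a Bezout-type bound $2\cdot 3=6$), whereas you compute the indicial ring $\CC[\rho_1,\rho_2]/(\rho_1^5,\ \rho_2^2-\rho_1\rho_2,\ 9\rho_1^3-5\rho_2^3)$ at the origin. That computation is correct — in particular $\rho_2\cdot(9\rho_1^3-5\rho_2^3)\equiv 4\rho_1^3\rho_2\equiv\tfrac{36}{5}\rho_1^4$ modulo the other generators, so $\rho_1^4$ does lie in the ideal — and it buys something the paper's order count does not: it locates the exponents at the origin, which is exactly what Lemma 5.3 exploits when it produces the extra solutions $I_5,I_6$ with exponent $\tfrac12$ in $x$.

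Two concrete soft spots. First, your deduction of $\cL I^{X_2}=0$ from $(2\delta_{q_1}+\delta_{q_2})^2(\cL I^{X_2})=0$ has a hole at precisely the interesting coefficient. On the coefficient of $q_1^{h+d_1}q_2^{\xi+d_2}$ that operator acts as multiplication by $(2h+\xi+2d_1+d_2)^2$, which is invertible (scalar plus nilpotent) for $(d_1,d_2)\neq(0,0)$; but at $(0,0)$ it is $(2h+\xi)^2=4h^2+5h\xi$, which is not injective on $H^*(\PP(\cO_{\PP^4}(-1)\oplus\cO_{\PP^4}))$ (it kills $h^4$), so a nonzero series of the relevant shape \emph{can} be annihilated by $(2\delta_{q_1}+\delta_{q_2})^2$. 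One must check separately that the initial coefficient $(9h^3-5\xi^3)(2h+\xi)^2$ vanishes — it does, using $\xi^2=\xi h$ and $h^5=0$, and this is exactly where the coefficients $9,-5$ of $\cL$ are needed. Your fallback (``check head-on against the recursions'') repairs this, but the primary argument as stated does not close. Second, you assert rather than perform the two computations carrying all the content, and had you performed them you would have found they fail for the formulas as literally printed: the displayed $I^{X_2}$ carries a spurious factor $\prod_{m=1}^{2d_1}(2h+mz)$ (left over from Section 4), and the recursion in $d_2$ forces $\triangle_2=\delta_{q_2}(\delta_{q_2}-\delta_{q_1})-q_2(2\delta_{q_1}+\delta_{q_2}+1)^2$ rather than the printed $(2\delta_{q_2}+\delta_{q_1}+1)^2$. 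Only with that correction does the Weyl-algebra identity (5.1) actually hold (your consistency check of the $q$-free parts cannot detect this, since the discrepancy sits entirely in the $q_2$-part), and only then is $\triangle_2$ compatible with $\triangle_2'$ of Lemma 5.2. These are typos of the paper, but a proof that claims to verify them without doing so is incomplete exactly where the verification is nontrivial.
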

\begin{proof}
	The GKZ system attached to the toric data gives rise to the generators $\triangle_1$ and $\triangle_2$. By the factorization (5.1), we obtain a differential operator $\cL$ of order 3, thus the system $\{\triangle_1I=\triangle_2I=\cL I=0\}$ can have at most 6-dimensional solution space. It is straightforward to check that the components of $I^{X_2}$ comprise 6 linearly independent solution to this differential equation system, hence they must form a basis.
\end{proof}
On the other hand, by making the change of variable $x\mapsto q_1^{-1}$ and $y\mapsto q_1q_2$, the resulting differential operators turn out to be the annihilators of $\bar I^{Y_2}$. We have the following lemma.
\begin{lemma}
	The components of $\bar I^{Y_2}(x,y)$ comprise 4 linearly independent solutions to the differential equation system $\{\triangle'_1I=\triangle'_2I=\cL' I=0\}$ at any point around the origin in $(\CC^*)^2$, where
	\[\triangle'_1:=x(\delta_{y}-\delta_{x})^5-\delta_{x}(3\delta_{y}-2\delta_{x}+1)^2(3\delta_{y}-2\delta_{x}+2)^2,\]
	\[\triangle'_2:=\delta_{y}\delta_{x}-xy(3\delta_{y}-2\delta_{x}+1)^2,\]
	\begin{align*}
	\cL'&:=9(\delta_{y}-\delta_x)^3-5\delta_y^3-36(\delta_x+1)(3\delta_{y}-2\delta_x-1)^2x^{-1}+(36(\delta_{y}-\delta_x)^3+45(\delta_{y}-\delta_x)^2\delta_y\\&\qquad+25(\delta_{y}-\delta_x)\delta_{y}^2+5\delta_{y}^3)xy.
	\end{align*}
\end{lemma}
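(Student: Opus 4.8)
The plan is to follow the template of the proofs of Lemma 3.2 and Lemma 4.2: first identify $\triangle'_1,\triangle'_2,\cL'$ as the images of the operators of Lemma 5.1 under the substitution, and then check directly that the four components of $\bar I^{Y_2}$ lie in the resulting solution space. Since $q_1\mapsto x^{-1}$, $q_2\mapsto xy$ is an automorphism of $(\CC^*)^2$ inducing $\delta_{q_1}=\delta_y-\delta_x$ and $\delta_{q_2}=\delta_y$, substituting into $\triangle_1,\triangle_2,\cL$ and clearing the factor $q_1=x^{-1}$ by multiplying through by a suitable power of $x$ produces exactly $\triangle'_1,\triangle'_2,\cL'$; this is a purely mechanical verification. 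Because the change of variables is a biholomorphism, it carries the (at most) six-dimensional scalar solution space of $\{\triangle_1=\triangle_2=\cL=0\}$ isomorphically onto that of $\{\triangle'_1=\triangle'_2=\cL'=0\}$, so the transformed system is again at most six-dimensional.

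For the membership of $\bar I^{Y_2}$ I would argue at the level of coefficients. Writing (with $z$ normalized to $1$)
\[\bar I^{Y_2}(x,y)=(3p)^2\,y^{p}\sum_{(i,j)\in\NN^2}x^iy^jB_{i,j},\]
the explicit hypergeometric expression gives the ratios $B_{i+1,j}/B_{i,j}$ and $B_{i,j+1}/B_{i,j}$ as rational functions of $p,i,j$, with the convention $B_{i,j}=0$ outside $\NN^2$. The operator $x$ shifts $i\mapsto i+1$ while $\delta_x,\delta_y$ act diagonally on each monomial by $i$ and $p+j$ respectively, so $\triangle'_1\bar I^{Y_2}=0$ and $\triangle'_2\bar I^{Y_2}=0$ reduce, coefficient by coefficient in $x^iy^j$, to precisely these two ratio identities; matching the index shifts in the products against the linear factors of $\triangle'_1$ and $\triangle'_2$ then closes the check.

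For $\cL'$ I would exploit the factorization (5.1). Transforming it by the same substitution yields $(3\delta_y-2\delta_x)^2\cL'$ as a combination of $\triangle'_1$ and $\triangle'_2$ (up to a power of $x$), so the two vanishings just obtained give $(3\delta_y-2\delta_x)^2\cL'\bar I^{Y_2}=0$. On each monomial $(3\delta_y-2\delta_x)$ acts by $3(p+j)-2i$, a unit whenever $3j\neq 2i$; hence $\cL'\bar I^{Y_2}$ can fail to vanish only along the diagonal $3j=2i$, where this factor degenerates to the nilpotent $(3p)^2$. The residual content is therefore a short direct check that the diagonal coefficients of $\cL'\bar I^{Y_2}$ vanish, and this is where I expect the main difficulty to sit, since it is the one place the factorization does not conclude for us and one must track the mixing of the $x^{-1}$- and $xy$-coefficients by hand.

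Finally, linear independence and the count are immediate: because of the prefactor $(3p)^2$ together with $p^6=0$, the cohomology expansion of $\bar I^{Y_2}$ is supported only in degrees $p^2,p^3,p^4,p^5$, and the lowest-order terms of these four components are the distinct classes $p^2,\dots,p^5$, so they are linearly independent over $\CC$. As the solution space is at most six-dimensional, these four genuine solutions embed into it, which is exactly the statement of the lemma; the two remaining solutions are produced separately, as in Lemma 4.3.
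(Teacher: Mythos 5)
Your plan coincides, for the most part, with the direct verification that the paper compresses into its one-line proof (``this is easy to check''): transport the operators of Lemma 5.1 through $q_1\mapsto x^{-1}$, $q_2\mapsto xy$, verify the components of $\bar I^{Y_2}$ against the resulting system coefficientwise, bound the solution space by $6$, and extract linear independence of the four components from their initial terms. Those parts are sound, with two small caveats. First, the ``purely mechanical'' identification fails for $\triangle_2$ as printed in Lemma 5.1: the substitution gives $\delta_y\delta_x-xy(3\delta_y-\delta_x+1)^2$, so the match with $\triangle'_2$ requires first correcting the typo $(2\delta_{q_2}+\delta_{q_1}+1)^2\mapsto(2\delta_{q_1}+\delta_{q_2}+1)^2$, as forced by the factor $\prod_{m=1}^{2d_1+d_2}(2h+\xi+mz)^2$ in $I^{X_2}$. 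Second, the rank bound of Lemma 5.1 is stated near the origin of the $(q_1,q_2)$-chart, whereas the origin of the $(x,y)$-chart corresponds to $q_1\to\infty$; the bound still holds there, but it must be invoked at the relevant points rather than ``carried over'' verbatim.

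The genuine gap is in your handling of $\cL'$, and it is larger than your wording suggests. From the transported factorization (5.1) you correctly obtain $(3\delta_y-2\delta_x)^2\cL'\bar I^{Y_2}=0$, and off the diagonal $3j=2i$ the factor acts on $x^iy^{j+p/z}$ by $(3(p+j)-2i)^2$, a unit of $\CC[p]/(p^6)$, so those coefficients vanish. On the diagonal, however, it acts by $9p^2$, while every coefficient of $\cL'\bar I^{Y_2}$ is already divisible by $p^2$ because of the prefactor $(3p)^2$; the relation $9p^2c=0$ in $\CC[p]/(p^6)$ therefore only pins such a coefficient down to $\mathrm{span}(p^4,p^5)$. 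Expanding $y^{p/z}$, this means your argument proves that $\cL'$ annihilates the $p^2$- and $p^3$-components of $\bar I^{Y_2}$ but says nothing about the $p^4$- and $p^5$-components: for exactly two of the four components the lemma remains unproved. Moreover, the deferred ``short direct check'' is not negligible: it is a three-term identity relating $B_{3k,2k}$, $B_{3k+1,2k}$ and $B_{3k-1,2k-1}$ (coming from the constant, $x^{-1}$ and $xy$ parts of $\cL'$), i.e. a computation of exactly the kind your factorization shortcut was designed to avoid, and you never perform it. To complete the proof you must either carry out that diagonal verification in degrees $p^4$ and $p^5$, or simply check $\cL'\bar I^{Y_2}=0$ coefficientwise for all $(i,j)$ directly, which is what the paper's terse proof amounts to.
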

\begin{proof}
	This is easy to check.
\end{proof}

To find the extra solutions to the above differential equation system, we adopt the same method used in previous sections, namely, define $\bar I^{Y_2}_{ext}(x,y)$ in the following way.
\[\bar I^{Y_2}_{ext}(x,y)=x^{\frac12+u}\sum_{(i,j)\in\NN^2}x^iy^jC_{i,j},\tag{5.2}\]
where $\{C_{i,j}\}$ satisfies the following recursion relations for $(i,j)\in \ZZ^2$:
\[C_{i-1,j}(j-u+\frac12)^5=C_{i,j}(i+u+\frac12)(3j-2i-2u)(3j-2i-2u+1)^2,\tag{5.3}\]
\[(3j-2i-2u-1)^2C_{i-1,j-1}=C_{i,j}(j)(i+\frac12+u).\tag{5.4}\] 
\begin{lemma}
	Let $I_5,I_6\in \CC[[x,y,\log x]][x^{\frac12}]$ be the components of $\bar I^{Y_2}_{ext}(x,y)$ in the following sense
	\begin{align*}
	\pi:\CC[[x,y,u,\log x]][x^{\frac12}] &\longrightarrow \CC[[x,y,u,\log x]][x^{\frac12}]/(u^2),\\
	x^{\frac12}e^{u\log x}\sum_{(i,j)\in\NN^2}x^iy^jC_{i,j} &\longmapsto I_5+I_6u,
	\end{align*}
	where $\pi$ is the obvious projection map, and $C_{i,j}$ are defined recursively by (5.3) and (5.4) with initial condition $C_{0,0}=1$.
	Then $I_5$ and $I_6$, together with the components of $\bar I^{Y_2}(x,y)$, comprise a basis of solutions to the differential equation system $\{\triangle'_1I=\triangle'_2I=\cL'I=0\}$ at any point around the origin in $(\CC^*)^2$.
\end{lemma}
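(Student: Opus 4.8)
The plan is to follow exactly the template established in Lemma 3.3 and Lemma 4.3, since the structure of the present Lemma 5.3 is identical: I must show that the two series $I_5$ and $I_6$ extracted from $\bar I^{Y_2}_{ext}(x,y)$ are genuine solutions of $\{\triangle'_1 I=\triangle'_2 I=\cL' I=0\}$, that they are linearly independent from the four components of $\bar I^{Y_2}(x,y)$, and that together these six series span the full solution space. First I would verify that the solution ansatz (5.2), namely $x^{\frac12+u}\sum_{(i,j)}x^iy^jC_{i,j}$, when fed into $\triangle'_1$ and $\triangle'_2$, produces precisely the recursion relations (5.3) and (5.4). Concretely, applying $\triangle'_2=\delta_y\delta_x-xy(3\delta_y-2\delta_x+1)^2$ to a monomial $x^{i+\frac12+u}y^j$ gives the eigenvalue $j\,(i+\tfrac12+u)$ from the first term, while the $xy$-factor shifts $(i,j)\mapsto(i+1,j+1)$ and contributes the coefficient $(3j-2i-2u-1)^2$; matching these reproduces (5.4). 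Similarly $\triangle'_1=x(\delta_y-\delta_x)^5-\delta_x(3\delta_y-2\delta_x+1)^2(3\delta_y-2\delta_x+2)^2$ yields (5.3) after the shift $x\mapsto x$ lowers the $x$-index by one. This step is the routine compatibility check that the proof of Lemma 5.1 says "is easy to check."

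Next I would impose $u^2=0$ and argue that, in $\CC[[x,y,u,\log x]][x^{\frac12}]/(u^2)$, the recursion forces $C_{i,j}=0$ whenever $i<0$ or $j<0$. The crucial point is that when solving (5.3) and (5.4) downward one encounters factors that vanish modulo $u^2$ at the boundary of the first quadrant, so the series truncates to $(i,j)\in\NN^2$; hence $I_5$ and $I_6$ are honest power series solutions rather than formal bi-infinite ones. With the initial condition $C_{0,0}=1$, the recursions then determine all $C_{i,j}$ uniquely for $(i,j)\in\NN^2$, so $I_5+I_6 u$ is well-defined and $I_5,I_6$ solve the system. I would then invoke the factorization (5.1), which shows $\cL'$ is a consequence of $\triangle'_1,\triangle'_2$, to conclude that $I_5,I_6$ also satisfy $\cL' I=0$ automatically.

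For linear independence and spanning, I would compare the leading terms: the four components of $\bar I^{Y_2}(x,y)$ begin with powers $x^0$ (times $p^k$, $k=0,1,2,3$), whereas $I_5$ and $I_6$ carry the factor $x^{\frac12}$ (and $I_6$ additionally a $\log x$), so all six series have distinct initial behavior and are therefore linearly independent. Finally, by Lemma 5.1 the holonomic rank of $\{\triangle'_1 I=\triangle'_2 I=\cL' I=0\}$ is at most $6$ (the change of variables preserves this bound), so six independent solutions must form a basis, which completes the proof.

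The main obstacle I anticipate is the truncation argument modulo $u^2$: one must check carefully that the half-integer shift $\frac12+u$ in the exponent makes the vanishing factors in (5.3)--(5.4) degenerate in exactly the right way to kill the would-be negative-index coefficients, rather than producing poles. In the earlier sections this worked because the factor $(i+\tfrac12+u)$ never vanishes for $i\in\ZZ$ while the complementary factor controlling the $j$-direction (here the $(j)$ in (5.4) and the $(j-u+\tfrac12)^5$ in (5.3)) enforces the boundary vanishing; I would verify that the specific exponents appearing in $\triangle'_1$ (the fifth power and the squared cubics) do not accidentally introduce a second solution branch that spoils the clean $u\mapsto u^2=0$ truncation. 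Once this is confirmed, the rest is the same bookkeeping as in Lemmas 3.3 and 4.3.
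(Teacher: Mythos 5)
Your overall strategy coincides with the paper's (derive the recursions from the operators, truncate via $u^2=0$, get uniqueness from $C_{0,0}=1$, separate the six solutions by their leading terms, and finish with the rank-6 bound), but there is one genuine gap: the step where you claim that $\cL' I_5=\cL' I_6=0$ holds \emph{automatically} because the factorization (5.1) ``shows $\cL'$ is a consequence of $\triangle'_1,\triangle'_2$.'' That is not what (5.1) says. After the change of variables, (5.1) reads
\[36\triangle'_1-P'\,\triangle'_2=(3\delta_y-2\delta_x)^2\,\cL',\]
where $P'$ is the transform of the cubic operator $36\delta_{q_1}^3+45\delta_{q_1}^2\delta_{q_2}+25\delta_{q_1}\delta_{q_2}^2+5\delta_{q_2}^3$. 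So what lies in the left ideal generated by $\triangle'_1,\triangle'_2$ is $(3\delta_y-2\delta_x)^2\cL'$, not $\cL'$ itself. From $\triangle'_1I=\triangle'_2I=0$ you may only conclude $(3\delta_y-2\delta_x)^2\bigl(\cL' I\bigr)=0$, and one cannot cancel the operator on the left: $(3\delta_y-2\delta_x)^2$ has a large kernel on the relevant function space. Concretely, on the sector $x^{\frac12+u}\CC[[x,y]]$ modulo $u^2$ it kills every monomial $x^{i+\frac12+u}y^j$ with $3j-2i-1=0$ (e.g.\ $(i,j)=(1,1)$), since $(3j-2i-1-2u)^2\equiv 0 \bmod u^2$ there; hence $\cL'\bar I^{Y_2}_{ext}$ could a priori be a nonzero series supported on the line $3j=2i+1$, and your argument cannot rule this out. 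Note also that if $\cL'$ really were in the left ideal of $\triangle'_1,\triangle'_2$, then the pair alone (orders $5$ and $2$) would already have rank at most $6$ and the entire point of Lemma 5.1 --- manufacturing $\cL$ via (5.1) precisely to cut the rank from $10$ down to $6$ --- would be moot; the rank-6 bound you invoke at the end is only available for the \emph{triple} system, so solutions of the pair alone cannot be counted toward that basis.

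The paper closes exactly this hole by checking directly that the recursions (5.3)--(5.4) are compatible with all three operators $\triangle'_1$, $\triangle'_2$ \emph{and} $\cL'$; that direct verification (or an inductive argument showing the coefficients of $\cL'\bar I^{Y_2}_{ext}$ along $3j=2i+1$ vanish) is what you must restore. The remainder of your proposal --- the truncation modulo $u^2$, uniqueness of the $C_{i,j}$, linear independence from the leading terms $x^{1/2}$ and $x^{1/2}\log x$ against the $x$-free components of $\bar I^{Y_2}$, and the dimension count --- matches the paper's proof. One incidental remark: if you actually carry out your first step, the recursion produced by $\triangle'_1$ is $(j-i+\frac12-u)^5C_{i-1,j}=(i+\frac12+u)(3j-2i-2u)^2(3j-2i-2u+1)^2C_{i,j}$, which differs from (5.3) as printed (the paper's (5.3) appears to contain typos in the exponents); this does not affect the structure of your argument, but a fully rigorous write-up should note and correct it.
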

\begin{proof}
	As $\bar I^{Y_1}_{ext}(x,y)$ is of form (5.2), it is easy to check that the recursion relation (5.3) and (5.4) are compatible with $\triangle'_1$, $\triangle'_2$ and $\cL'$. If we require $u^2=0$, we see that $C_{i,j}=0$ if $i<0$ or $j<0$. Moreover, the initial condition $C_{0,0}=1$ allows us to determine $C_{i,j}$ uniquely for all $(i,j)\in \NN^2$. Thus $I_5$ and $I_6$ are solutions to the differential equation system $\{\triangle'_1I=\triangle'_2I=\cL'I=0\}$. We also note that $I_5$, $I_6$, together with the components of $\bar I^{Y_2}(x,y)$, are linearly-independent because of their initial terms. On the other hand, the differential equation system $\{\triangle'_1I=\triangle'_2I=\cL'I=0\}$ should have at most 6-dimensional solution space, hence the lemma follows.
\end{proof}
Now we arrive at the proof of the main theorem in this section.
\begin{thm}
	The Conjecture 1.1 holds for the $\{X_2,Y_2\}$, namely:
	one may perform analytic continuation of $\cH(X_2)$ over the extended K\"ahler moduli to obtain a $D$-module $\bar\cH(X_2)$, then there exists a divisor $E$ and a submodule $\bar\cH^E(X_2)\subseteq \bar H(X_2)$ with maximum trivial monodromy around $E$, such that
	\[\bar\cH^E(X_2)|_{E}\simeq \cH(Y_2),\]
	where $\bar\cH^E(X_2)$ is the restriction to $E$.
\end{thm}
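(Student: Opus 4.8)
The plan is to follow verbatim the strategy already deployed for Theorems 3.4 and 4.4, now feeding in the data assembled in Lemmas 5.1, 5.2 and 5.3. First I would identify $\cH(X_2)$ with the cyclic $D$-module generated by $I^{X_2}(q_1,q_2)$ and $\cH(Y_2)$ with the cyclic $D$-module generated by $I^{Y_2}(y)$; this identification is legitimate by the Mirror Theorem together with the definition of the ambient part quantum $D$-module. By Lemma 5.1 the former is annihilated precisely by the system $\{\triangle_1 I=\triangle_2 I=\cL I=0\}$, so that the entire structure of $\cH(X_2)$ is encoded in this Picard--Fuchs system.

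Next I would perform the change of variable $x\mapsto q_1^{-1}$, $y\mapsto q_1q_2$, which induces $\delta_{q_1}=\delta_y-\delta_x$ and $\delta_{q_2}=\delta_y$ and carries the annihilating system to $\{\triangle'_1 I=\triangle'_2 I=\cL' I=0\}$. This substitution realizes the analytic continuation $\cH(X_2)\rightsquigarrow\bar\cH(X_2)$ over the extended K\"ahler moduli. By Lemma 5.2 the four components of $\bar I^{Y_2}(x,y)$ furnish four linearly independent solutions of the continued system, so they generate a rank-four sub-$D$-module $\bar\cH^E(X_2)\subseteq\bar\cH(X_2)$, where I let $E$ denote the transition divisor $\{x=0\}$.

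I would then verify the two monodromy assertions. Since the leading term of $\bar I^{Y_2}(x,y)$ is $y^{p/z}$ and only nonnegative integer powers of $x$ occur in its expansion, the submodule $\bar\cH^E(X_2)$ has trivial monodromy about $x=0$. Maximality follows from Lemma 5.3: the two remaining basis solutions $I_5,I_6$ carry the prefactor $x^{\frac12+u}$ and hence involve $x^{\frac12}$ and $\log x$, so they have nontrivial monodromy around $x=0$; consequently no strictly larger trivial-monodromy submodule can exist. Finally, because $I^{Y_2}(y)=\lim_{x\to 0}\bar I^{Y_2}(x,y)$, the restriction of $\bar\cH^E(X_2)$ along $E$ is generated by $I^{Y_2}(y)$, yielding the isomorphism $\bar\cH^E(X_2)|_E\simeq\cH(Y_2)$.

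The step I expect to be most delicate is not a computation but the careful justification of what ``restriction to $E$'' means for a $D$-module that a priori embeds into one carrying nontrivial monodromy, and the precise sense in which $\lim_{x\to 0}$ of the generating series induces the claimed isomorphism of cyclic $D$-modules. All of the genuine algebraic content---the order-three factorization (5.1) and the verification that the recursions (5.3)--(5.4) are compatible with $\triangle'_1,\triangle'_2,\cL'$---has already been packaged into Lemmas 5.1--5.3, so the theorem reduces to assembling these facts together with the monodromy and maximality bookkeeping.
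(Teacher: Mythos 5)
Your proposal is correct and follows essentially the same route as the paper's own proof: identifying the $D$-modules via their $I$-functions, performing the change of variables $x\mapsto q_1^{-1}$, $y\mapsto q_1q_2$ to realize the analytic continuation, taking the submodule generated by the components of $\bar I^{Y_2}(x,y)$ as $\bar\cH^E(X_2)$, deducing maximal trivial monodromy from Lemma 5.3, and recovering $\cH(Y_2)$ via the limit $x\to 0$. Your closing caveat about the precise meaning of restriction to $E$ is a fair observation, but the paper treats this point at the same level of informality, so there is no divergence in substance.
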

\begin{proof}
	Following the same line of arguments in previous sections, we identify the ambient part quantum $D$-module $\cH(X_2)$ and $\cH(Y_2)$ with the cyclic $D$-modules generated by $I^{X_2}(q_1,q_2)$ and $I^{Y_2}(y)$, respectively. The change of variable $x\mapsto q_1^{-1}$ and $y\mapsto q_1q_2$ give rise to the anayltic continuation $\cH(X_2)	\rightsquigarrow \bar\cH(X_2)$.
	
	By Lemma 5.1 and Lemma 5.2, we may consider the submodule of $\bar\cH(X_2)$ corresponding to the sub $D$-module attached to the components of $\bar I^{Y_2}(x,y)$. It has trivial monodromy around $x=0$ as the initial term of $\bar I^{Y_2}(x,y)$ does not involve $x$. This trivial monodromy is also maximal because by Lemma 5.3, the remaining two solutions $I_5, I_6$ have non-trivial monodromy around $x=0$. Let $E$ denote the transition divisor $x=0$, and $\bar\cH^E(X_1)$ denote this submodule.
	
	Since $I^{Y_2}(y)$ is recovered by $I^{Y_2}(y)=\lim_{x\to 0} \bar I^{Y_2}(x,y)$, we conclude that $\cH(Y_2)$ is recovered as the restriction of $\bar\cH^E(X_2)$ to $E$. Hence the theorem is proved.
\end{proof}

\bibliography{research}

\bibliographystyle{plain}

\end{document}